\journal{...}
\newtheorem{thm}{Theorem}
\newtheorem{cor}{Corollary}
\newtheorem{lem}{Lemma}
\newtheorem{prop}{Proposition}
\theoremstyle{definition}
\newtheorem{defn}{Definition}
\theoremstyle{remark}
\newtheorem{rem}{Remark}
\newtheorem{example}{Example}
\begin{document}
	\begin{frontmatter}
		\title{Relation-theoretic metrical coincidence and common fixed point theorems under nonlinear contractions}
		\author[mad]{Md Ahmadullah\corref{cor1}}
		\cortext[cor1]{Corresponding author}
		\ead{ahmadullah2201@gmail.com}
		\author[mid]{Mohammad Imdad}
		\ead{mhimdad@gmail.com}
		\author[maf]{Mohammad Arif}
		\ead{mohdarif154c@gmail.com}
		\address[mad]{Department of Mathematics, Aligarh Muslim University, Aligarh,-202002, U.P., India.}
		\address[mid]{Department of Mathematics, Aligarh Muslim University, Aligarh,-202002, U.P., India.}
		\address[maf]{Department of Mathematics, Aligarh Muslim University, Aligarh,-202002, U.P., India.}
		
		\begin{abstract}
			In this paper, we prove coincidence and common fixed points results under nonlinear contractions on a metric space equipped with an arbitrary binary relation. Our results extend, generalize, modify and unify several known results especially those are contained in Berzig [J. Fixed Point Theory Appl. 12, 221-238 (2012))]  and Alam and Imdad [To appear in Filomat (arXiv:1603.09159 (2016))]. Interestingly, a corollary to one of our main results under symmetric closure of a binary relation remains a sharpened version of a theorem due to Berzig. Finally, we use examples to highlight the accomplished improvements in the results of this paper.
		\end{abstract}
		\begin{keyword}
			Complete metric spaces\sep
			binary relations\sep
			contraction mappings\sep
			fixed point.
			\MSC[2010] 47H10 \sep 54H25
		\end{keyword}
	\end{frontmatter}
	
	\iffalse
	\documentclass[12pt, reqno]{amsart}
	\usepackage{amssymb}
	\usepackage[mathscr]{eucal}
	\usepackage{amscd}
	%\usepackage{cite}
	\usepackage{enumerate}
	\usepackage{amsfonts}
	\usepackage{amsmath, amsthm, amssymb}
	\usepackage{latexsym}
	\usepackage[dvips]{graphics}
	\addtolength{\topmargin}{-10mm}
	\addtolength{\textheight}{23mm}
	\addtolength{\oddsidemargin}{-20mm}
	\addtolength{\evensidemargin}{-20mm}
	\addtolength{\textwidth}{30mm}
	\theoremstyle{plain}
	
	%\doublespace
	
	\begin{document}
		\title[Relation-theoretic Metrical Coincidence and Common Fixed Point Theorems]
		{Relation-theoretic metrical coincidence and common fixed point theorems under nonlinear contractions with an application}
		\author[Ahmadullah, Imdad and Arif]{Md Ahmadullah$^{\ast}$, Mohammad Imdad, Mohammad Arif}
		\thanks{$^{\ast}$Corresponding author}
		\maketitle
		\begin{center}
			{\footnotesize Department of Mathematics, Aligarh Muslim University,\\ Aligarh-202002, U.P., India.\\
				
				Email addresses: ahmadullah2201@gmail.com,  mhimdad@gmail.com, mohdarif154c@gmail.com}
		\end{center}
		\fi
		\vspace{.3cm}
		\section{Introduction and Preliminaries} Banach contraction principle (see \cite{Bnch1922}) continues to be one of the most inspiring and core result of metric fixed point theory which also has various applications in classical functional analysis besides several other domains especially in mathematical economics and psychology. In the course of last several years, numerous authors have extended this result by weakening the contraction conditions besides enlarging the class of underlying metric space. In recent years such type of results are also established employing order-theoretic notions. Historically speaking, the idea of order-theoretic fixed points was initiated by Turinici \cite{Turinicid1986} in 1986. In 2004, Ran and Reurings \cite{RanR2004} formulated a relatively more natural order-theoretic version of classical Banach contraction principle.
		 The existing literature contains several relation-theoretic results on fixed, coincidence and common fixed point (e.g., partial order: Ran and Reurings \cite{RanR2004} and Nieto and Rodr\'{i}guez-L\'{o}pez \cite{NietoL2005}, tolerance:  Turinici \cite{Turinici2011,Turinici2012}, strict order: Ghods et al. \cite{GGGH2012}, transitive: Ben-El-Mechaiekh \cite{BenEM2015}, preorder: Turinici \cite{Turinici2013} etc). Berzig \cite{Berzig2012} established the common fixed point theorem for nonlinear contraction under symmetric closure of a arbitrary relation. Most recently, Alam and Imdad \cite{Alamimdad2} proved a relation-theoretic version of Banach contraction principle employing amorphous relation which in turn unify the several well known relevant order-theoretic fixed point theorems. Moreover, for further  details one can consults \cite{AhmadJI,AhmadIA,Alamimdad,Alamimdad2,Alamimdad3,BenEM2015,Berzig2012,Ciric1974,Jach,RanR2004,NietoL2005,SametT2012,Turinici2011,Turinici2012}.
		
		\vspace{.3cm}
		Our aim in this work is to proved some coincidence and common fixed point theorems for nonlinear contraction on metric space endowed with amorphous relation. The results proved herein generalize and unify main results  of Berzig \cite{Berzig2012}, Alam and Imdad \cite{Alamimdad2} and several others. To demonstrate the validity of the hypotheses and degree of generality of our results, we  also furnish some examples.
		
		\section{Preliminaries}
		For the sake of simplicity to have possibly self-contained presentation, we require some basic definitions, lemmas and propositions  for our subsequent discussion.
		\begin{defn} \cite{Jungck1976,Jungck1996} Let $(f,g)$ be a pair of self-mappings defined on a non-empty set  $X$. Then
			\begin{enumerate}[$(i)$]
				\item a point $u\in X$ is said to be a coincidence point of the pair $(f,g)$ if
				$fu=gu,$
				\item a point $v\in X$ is said to be a point of coincidence of the pair $(f,g)$ if there exists $u\in X$ such that $v=fu=gu,$
				\item a coincidence point $u\in X$ of the pair $(f,g)$ is said to be a common fixed point if $u=fu=gu,$
				\item  a pair $(f,g)$ is called commuting if $f(gu)=g(fu), \forall ~u\in X$.
			\end{enumerate}
		\end{defn}
		
		\begin{defn} \cite{Jungck1986, Sastry2000, Sessa1982} Let $(f,g)$ be a pair of self-mappings defined on a metric space $(X,d).$ Then
			\begin{enumerate}[$(i)$]
				\item $(f,g)$ is said to be weakly commuting if for all $u\in X$,
				~~	$d(f(gu),g(fu))\leq d(fu,gu),$
				\item  $(f,g)$ is said to be compatible if
				$\lim_{n\to \infty}d(f(gu_n),g(fu_n))=0$
				whenever $\{u_n\}\subset X$ is a sequence such that $\lim_{n\to \infty} gu_n=\lim_{n\to \infty}fu_n,$
				
				\item  $f$ is said to be a $g$-continuous at $u\in X$ if ${gu_n}\stackrel{d}{\longrightarrow}gu,$ for all sequence $\{u_n\}\subset X$, we have ${fu_n}\stackrel{d}{\longrightarrow} fu.$
				Moreover, $f$ is said to be a $g$-continuous if it is continuous at every point of $X.$
			\end{enumerate}
		\end{defn}
		
		\begin{defn} \cite{Lips1964} A subset $\mathcal{R}$ of
			$X\times X$ is called a binary relation on X. We say that
			``$u$ relates $v$ under $\mathcal{R}$" if and only if $(u,v)\in
			\mathcal{R}$.
		\end{defn}
		
		Throughout this paper, $\mathcal{R}$ stands for a `non-empty binary relation' ($i.e.,\mathcal{R} \ne \emptyset$) instead of `binary relation'
		while $\mathbb{N}_{0},$ $\mathbb{Q}$ and $\mathbb{Q}^{c}$ stand the set of whole numbers ($\mathbb{N}_{0}= \mathbb{N}\cup \{0\}$), the set of rational numbers and the set of irrational numbers respectively.
		
		\begin{defn} \cite{Maddux2006} A binary relation $\mathcal{R}$ defined on a non-empty set $X$ is called complete if every pair of elements of $X$ are comparable under that relation $i.e.,$ for all
			$u, v$ in $X,$ either $(u,v)\in \mathcal{R}$ or $(v,u)\in \mathcal{R}$  which is denoted by $[u,v]\in \mathcal{R}$.
		\end{defn}
		
		\begin{prop} \cite{Alamimdad} Let $\mathcal{R}$ be a binary relation defined on a non-empty set $X$. Then
			$(u,v)\in\mathcal{R}^s$ if and only if  $[u,v]\in\mathcal{R}.$
		\end{prop}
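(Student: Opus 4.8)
The plan is to prove this by directly unwinding the two definitions involved, since the statement is purely set-theoretic. Recall that the symmetric closure is $\mathcal{R}^s=\mathcal{R}\cup\mathcal{R}^{-1}$, where $\mathcal{R}^{-1}=\{(u,v):(v,u)\in\mathcal{R}\}$ is the transpose (inverse) relation, while the notation $[u,v]\in\mathcal{R}$ (introduced in the definition of a complete relation) is shorthand for the disjunction ``$(u,v)\in\mathcal{R}$ or $(v,u)\in\mathcal{R}$''. Thus both sides of the claimed equivalence are really the same disjunction dressed in different notation, and the whole argument amounts to a careful bookkeeping of which coordinate order lands in $\mathcal{R}$.

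First I would treat the forward implication. Assuming $(u,v)\in\mathcal{R}^s$, the definition of union splits into two cases: either $(u,v)\in\mathcal{R}$, in which case the first disjunct of $[u,v]\in\mathcal{R}$ holds outright; or $(u,v)\in\mathcal{R}^{-1}$, which by the definition of the transpose means precisely $(v,u)\in\mathcal{R}$, giving the second disjunct. In both cases $[u,v]\in\mathcal{R}$, as required.

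For the converse I would run the same case split in reverse. If $[u,v]\in\mathcal{R}$, then either $(u,v)\in\mathcal{R}\subseteq\mathcal{R}^s$, or $(v,u)\in\mathcal{R}$, the latter yielding $(u,v)\in\mathcal{R}^{-1}\subseteq\mathcal{R}^s$; in either event $(u,v)\in\mathcal{R}^s$. Since the argument is entirely definitional, there is no genuine obstacle here; the only point demanding attention is the orientation of the inverse relation, i.e.\ keeping straight that membership of $(u,v)$ in $\mathcal{R}^{-1}$ records $(v,u)\in\mathcal{R}$ rather than $(u,v)\in\mathcal{R}$, so that each disjunct is matched to the correct summand of the union $\mathcal{R}\cup\mathcal{R}^{-1}$.
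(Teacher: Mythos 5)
Your proof is correct: the paper states this proposition without proof (citing Alam and Imdad), and the intended argument is exactly the definitional case analysis you give, unwinding $\mathcal{R}^s=\mathcal{R}\cup\mathcal{R}^{-1}$ against the shorthand $[u,v]\in\mathcal{R}$ meaning $(u,v)\in\mathcal{R}$ or $(v,u)\in\mathcal{R}$. Your care with the orientation of $\mathcal{R}^{-1}$ is the only subtlety, and you handle it correctly in both directions.
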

		
		\begin{defn} \cite{Alamimdad} \label{3.5} Let $f$ be a self-mapping defined on a non-empty set $X$. Then a binary relation
			$\mathcal{R}$ on $X$ is called $f$-closed if ${\rm for~ all}~ u,v\in X$~
			$(u,v)\in \mathcal{R}\Rightarrow (fu,fv)\in \mathcal{R}.$
		\end{defn}
		
		\begin{defn}  \cite{Alamimdad2} \label{3.6} Let $(f,g)$ be a pair of
			self-mappings defined on a non-empty set $X$. Then a binary relation $\mathcal{R}$ on $X$  is called $(f,g)$-closed if  ${\rm for~ all}~ u,v\in X,$
				$(gu,gv)\in \mathcal{R}\Rightarrow (fu,fv)\in \mathcal{R}.$
		\end{defn}
		
		Notice that on setting $g=I,$ (the identity mapping on $X)$ Definition \ref{3.6} reduces to Definition \ref{3.5}.
		
		\begin{defn} \cite{Alamimdad} Let $\mathcal{R}$ be a binary relation defined on a non-empty set $X$. Then a sequence $\{u_n\}\subset X$ is said to be an $\mathcal{R}$-preserving if $(u_n,u_{n+1})\in\mathcal{R},\;\;\forall~n\in \mathbb{N}_{0}.$
		\end{defn}
		
		\begin{defn} \cite{Alamimdad2} Let $(X,d)$ be a metric space equipped with a binary relation $\mathcal{R}$.
			Then $(X,d)$ is said to be an $\mathcal{R}$-complete if every $\mathcal{R}$-preserving Cauchy sequence in $X$ converges to a point in $X$.
		\end{defn}
		
		\begin{rem} \cite{Alamimdad2} \label{rmk1} Every complete metric space is $\mathcal{R}$-complete, where $\mathcal{R}$ denotes a
			binary relation. Moreover, if $\mathcal{R}$ is universal relation, then notions of completeness
			and $\mathcal{R}$-completeness are same.
		\end{rem}
		
		\begin{defn} \cite{Alamimdad2}\label{3.9} Let $(X,d)$ be a metric space equipped with a binary relation $\mathcal{R}$. Then a mappings $f: X\rightarrow X$ is said to be an $\mathcal{R}$-continuous at $u$ if $u_n\stackrel{d}{\longrightarrow} u,$ for any $\mathcal{R}$-preserving sequence $\{u_n\}\subset X,$ we have $fu_n\stackrel{d}{\longrightarrow} fu$. Moreover, $f$ is said to be an
			$\mathcal{R}$-continuous if it is $\mathcal{R}$-continuous at every point of $X$.
		\end{defn}
		
		\begin{defn} \cite{Alamimdad2}\label{3.10} Let $(f,g)$ be a pair of self-mappings defined on a metric space $(X,d)$ equipped with a binary relation $\mathcal{R}$. Then $f$ is said to be a $(g,\mathcal{R})$-continuous at $x$ if $gu_n\stackrel{d}{\longrightarrow} gu$, for
			any $\mathcal{R}$-preserving sequence $\{u_n\}\subset X,$ we have
			$fu_n\stackrel{d}{\longrightarrow} fu$. Moreover, $f$ is called a
			$(g,\mathcal{R})$-continuous if it is $(g,\mathcal{R})$-continuous at every point of $X$.
		\end{defn}
		
		Notice that on setting $g=I$ (the identity mapping on $X)$, Definition \ref{3.10} reduces to Definition \ref{3.9}.
		
		\begin{rem} \label{rmk2} Every continuous mapping is $\mathcal{R}$-continuous, where $\mathcal{R}$ denotes a
			binary relation. Moreover, if $\mathcal{R}$ is universal relation, then notions of $\mathcal{R}$-continuity
			and continuity are same.
		\end{rem}
		
		\begin{defn} \cite{Alamimdad} \label{3.11}Let $(X,d)$ be a metric space. Then a binary relation $\mathcal{R}$ on $X$ is said to be
			$d$-self-closed if for any $\mathcal{R}$-preserving sequence
			$\{u_n\}$ with $u_n\stackrel{d}{\longrightarrow} u$, there
			is a subsequence $\{u_{n_k}\}{\rm \;of\;} \{u_n\}$
			such that $[u_{n_k},u]\in\mathcal{R},~~{\rm for~all}~k\in \mathbb{N}_{0}.$
		\end{defn}
		
			\begin{defn} \cite{Alamimdad2} \label{3.12}Let $g$ be a self-mapping on a metric space $(X,d)$. Then a binary relation $\mathcal{R}$ on $X$ is said to be
				$(g,d)$-self-closed if for any $\mathcal{R}$-preserving sequence
				$\{u_n\}$ with $u_n\stackrel{d}{\longrightarrow} u$, there
				is a subsequence $\{u_{n_k}\}{\rm \;of\;} \{u_n\}$
				such that $[gu_{n_k},gu]\in\mathcal{R},~~{\rm for~all}~k\in \mathbb{N}_{0}.$
			\end{defn}
			Notice that under the consideration $g=I$ (the identity mapping on $X)$, Definition \ref{3.12} turn out to be Definition \ref{3.11}.
		\begin{defn} \cite{SametT2012} \label{3.13} Let $(X,d)$ be a metric space endowed with a arbitrary binary relation $\mathcal{R}$. Then a subset $D$ of $X$
			is said to be an $\mathcal{R}$-directed if for every pair of points $u,v$ in $D$, there
			is $w$ in $X$ such that $(u,w)\in\mathcal{R}$ and
			$(v,w)\in\mathcal{R}$.
		\end{defn}
		
		\begin{defn} \cite{Alamimdad2} \label{3.14} Let $g$ be a self-mapping on a metric space $(X,d)$ endowed with a binary relation $\mathcal{R}$. Then a subset $D$ of $X$
			is said to be a $(g,\mathcal{R})$-directed if for every pair of points $u,v$ in $D$, there
			is $w$ in $X$ such that $(u,gw)\in\mathcal{R}$ and
			$(v,gw)\in\mathcal{R}.$
		\end{defn}
		 Notice that on setting $g=I$ (the identity mapping on $X)$, Definition \ref{3.14} turn out to be Definition \ref{3.13}.
		
		\begin{defn} \cite{Alamimdad2}
			Let $(f,g)$ be a pair of self-mappings defined on a metric space $(X,d)$ equipped with a binary relation $\mathcal{R}$. Then the pair $(f,g)$ is said to be an
			$\mathcal{R}$-compatible if $\lim\limits_{n\to \infty}d(g(fu_n),f(gu_n))=0$, whenever $\lim\limits_{n\to \infty}g(u_n)=\lim\limits_{n\to \infty}f(u_n)$, for any sequence $\{u_n\}\subset X$ such
			that $\{fu_n\}$ and $\{gu_n\}$ are $\mathcal{R}$-preserving.
		\end{defn}

		\begin{lem}\label{lm1} \cite{HRS2011}
			Let $g$ be a self-mapping defined on a non-empty set $X$. Then there exists a subset $Z\subseteq X$ with $g(Z)=g(X)$ and $g: Z\to X$ is one-one.
		\end{lem}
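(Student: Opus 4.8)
The plan is to construct $Z$ by selecting exactly one preimage for each point lying in the range of $g$. Concretely, for every $y\in g(X)$ the fiber $g^{-1}(\{y\})=\{x\in X:gx=y\}$ is non-empty by the very definition of the image. Invoking the axiom of choice, I would pick a single representative $z_y\in g^{-1}(\{y\})$ from each such fiber and then set $Z=\{z_y:y\in g(X)\}$.

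With this $Z$ in hand, verifying the two asserted properties is routine unwinding of definitions. To see that $g(Z)=g(X)$: the inclusion $g(Z)\subseteq g(X)$ is immediate since $Z\subseteq X$; conversely, any $y\in g(X)$ equals $g(z_y)$ with $z_y\in Z$, which yields $g(X)\subseteq g(Z)$, and hence equality. To see that $g$ is one-one on $Z$: if $z_y,z_{y'}\in Z$ satisfy $g(z_y)=g(z_{y'})$, then $y=g(z_y)=g(z_{y'})=y'$, and therefore $z_y=z_{y'}$, because exactly one representative has been attached to each point of the range.

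The only genuinely non-trivial ingredient is the simultaneous choice of the representatives $z_y$ across the (possibly uncountable) family of fibers $\{g^{-1}(\{y\}):y\in g(X)\}$. Since $X$ carries no additional structure that would permit a canonical selection, some form of the axiom of choice is unavoidable; this is the one step I expect to be the crux, while everything else reduces to the definitions of image and injectivity.
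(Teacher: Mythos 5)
Your proof is correct and follows the standard argument for this lemma: the paper itself cites it from Haghi, Rezapour and Shahzad without reproducing a proof, and their original proof is exactly your choice-of-representatives construction (selecting one element from each fiber $g^{-1}(\{y\})$, equivalently from each class of the relation $x\sim x'$ iff $gx=gx'$, via the axiom of choice). Your verification of $g(Z)=g(X)$ and of injectivity on $Z$ is complete, and you are right that the appeal to choice is the only non-trivial ingredient.
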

		\vspace{.2cm}
		For a given non-empty set $X$, together with a binary relation $\mathcal{R}$ on $X$ and a pair of self-mappings $(f,g)$ on $X,$ we use the following notations:
		\begin{itemize}
			\item $C(f,g)$: the collection of all coincidence points of $(f,g)$;
			\item $\mathcal{M}_{f}(gu,gv)$ := $max\big\{d(gu,gv), d(gu,fu), d(gv,fv), \frac{1}{2}[d(gu,fv)+d(gv,fu)]\big\}$; and
			\item $\mathcal{N}_{f}(gu,gv)$ := $max\big\{d(gu,gv), \frac{1}{2}[d(gu,fu)+d(gv,fv)], \frac{1}{2}[d(gu,fv)+d(gv,fu)]\big\}$.	
		\end{itemize}
		
		\vspace{.3cm}
		\begin{rem}\label{rem3}
			Observe that, $\mathcal{N}_{f}(gu,gv)\leq\mathcal{M}_{f}(gu,gv),~~{\rm for~all}~u,v\in X.$
		\end{rem}
		
		\vspace{.3cm}
		Let $\Phi$ be the family of all mappings $\varphi : [0,\infty) \to [0,\infty)$ satisfying the following properties:
		\begin{description}
			\item[$(\Phi_1)$] $\varphi$ is increasing;
			\item[$(\Phi_2)$] $\displaystyle\sum_{n=1}^{\infty}\varphi^{n}(t)<\infty$ for each $t > 0$, where $\varphi^{n}$ is the $n$-th iterate of $\varphi$.
		\end{description}
		
		\begin{lem}\label{lm2} \cite{SametT2012}
			Let $\varphi\in\Phi$. Then for all $s>0,$ we have $\varphi(s)<s.$
		\end{lem}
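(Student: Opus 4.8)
The plan is to argue by contradiction, exploiting the elementary fact that convergence of the series in $(\Phi_2)$ forces its general term to vanish, while the monotonicity in $(\Phi_1)$ prevents the iterates from ever decreasing once the desired strict inequality fails. Thus the two hypotheses are pulled against each other to rule out any counterexample.

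First I would suppose, to the contrary, that there exists some $s>0$ with $\varphi(s)\geq s$. The key step is then to show by induction that $\varphi^{n}(s)\geq s$ for every $n\in\mathbb{N}$. The base case is exactly the standing assumption $\varphi(s)\geq s$. For the inductive step, assuming $\varphi^{n}(s)\geq s$ and invoking $(\Phi_1)$ (that $\varphi$ is increasing), I apply $\varphi$ to both sides of $\varphi^{n}(s)\geq s$ to obtain $\varphi^{n+1}(s)=\varphi(\varphi^{n}(s))\geq\varphi(s)\geq s$, which closes the induction.

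With $\varphi^{n}(s)\geq s>0$ for all $n$, the terms of the series $\sum_{n=1}^{\infty}\varphi^{n}(s)$ are bounded below by the fixed positive constant $s$, so they cannot tend to $0$. This contradicts the convergence guaranteed by $(\Phi_2)$ applied with $t=s$, since the general term of any convergent series must tend to zero. Hence no such $s$ exists, and therefore $\varphi(s)<s$ holds for every $s>0$, as claimed.

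As for obstacles, I expect none of substance: the argument is short, and the only point deserving care is the correct deployment of monotonicity in the inductive step, namely that from $\varphi^{n}(s)\geq s$ one should deduce $\varphi(\varphi^{n}(s))\geq\varphi(s)$ (bounding below by $\varphi(s)$ and hence by $s$), rather than the less useful $\varphi(\varphi^{n}(s))\geq\varphi^{n}(s)$. Stating this step cleanly keeps the chain of inequalities transparent and avoids any appearance of circularity.
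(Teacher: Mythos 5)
Your proposal is correct and follows essentially the same route as the paper's own (commented-out) proof: both argue by contradiction, use $(\Phi_1)$ to propagate $\varphi^{n}(s)\geq s$ through all iterates, and then contradict $(\Phi_2)$ since the terms of a convergent series must tend to zero. Your write-up is in fact slightly more careful than the paper's, as it makes the induction and the role of series convergence explicit rather than merely ``making $n\to\infty$''.
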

		\iffalse
		\begin{proof}
			Suppose on contrary $\exists s>0$ such that  $\varphi(s)\geq s.$ As $\varphi$ is increasing, we have
			$$\varphi^{n}(s)\geq s, ~~\forall n\in\mathbb{N}.$$
			Making $n\to\infty$, we have $s=0,$ which is a contradiction. Thus, for all $t>0,$ we have $\varphi(t)<t.$
		\end{proof}
		\fi
		\begin{prop}\label{p2}
			 Let $(f,g)$ be a pair of self-mappings defined on a metric space $(X,d)$ equipped with a binary relation $\mathcal{R}.$ and $\varphi\in\Phi $. Then the following conditions are equivalent:
			\begin{description}
				\item[(I)] $d(fu,fv)\leq\varphi(\mathcal{M}_{f}(gu,gv)) ~with ~(gu,gv)\in \mathcal{R};$
				\item[(II)] $d(fu,fv)\leq\varphi(\mathcal{M}_{f}(gu,gv)) ~with ~[gu,gv]\in \mathcal{R}.$
			\end{description}
		\end{prop}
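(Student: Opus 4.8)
The plan is to establish the two implications separately, observing that $(\mathrm{II})\Rightarrow(\mathrm{I})$ is immediate while $(\mathrm{I})\Rightarrow(\mathrm{II})$ rests on a short symmetry argument. Throughout I would use the fact, recorded in the earlier Proposition, that $[gu,gv]\in\mathcal{R}$ is equivalent to $(gu,gv)\in\mathcal{R}^s$, i.e. to the disjunction $(gu,gv)\in\mathcal{R}$ or $(gv,gu)\in\mathcal{R}$.

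For $(\mathrm{II})\Rightarrow(\mathrm{I})$: whenever $(gu,gv)\in\mathcal{R}$ we trivially have $[gu,gv]\in\mathcal{R}$, so the inequality assumed in $(\mathrm{II})$ applies verbatim and yields $(\mathrm{I})$ with no computation. For $(\mathrm{I})\Rightarrow(\mathrm{II})$: I would fix $u,v\in X$ with $[gu,gv]\in\mathcal{R}$ and split into the two comparability cases. If $(gu,gv)\in\mathcal{R}$, then $(\mathrm{I})$ gives the conclusion directly. If instead $(gv,gu)\in\mathcal{R}$, I would apply $(\mathrm{I})$ with the roles of $u$ and $v$ interchanged to obtain $d(fv,fu)\leq\varphi(\mathcal{M}_f(gv,gu))$, and then invoke two symmetries: the symmetry of the metric, $d(fu,fv)=d(fv,fu)$, and the symmetry of the auxiliary functional, $\mathcal{M}_f(gu,gv)=\mathcal{M}_f(gv,gu)$.

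The only step that requires any checking is this last symmetry, which follows by inspecting the four terms defining $\mathcal{M}_f$: the first term involves only $d(gu,gv)$ and is symmetric, the two terms $d(gu,fu)$ and $d(gv,fv)$ merely exchange with one another under the swap $u\leftrightarrow v$ and so leave the maximum unchanged, and the averaged term $\tfrac12[d(gu,fv)+d(gv,fu)]$ is manifestly invariant. Combining the two symmetries yields $d(fu,fv)\leq\varphi(\mathcal{M}_f(gu,gv))$ in the second case as well, completing $(\mathrm{I})\Rightarrow(\mathrm{II})$.

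The argument is elementary and presents no genuine obstacle; in particular, neither the monotonicity $(\Phi_1)$ nor the summability $(\Phi_2)$ of $\varphi$ is needed here—the equivalence is driven purely by the symmetry of $d$ and of $\mathcal{M}_f$. The mild point to be careful about is simply not to conflate the single-pair relation $\mathcal{R}$ with its symmetric version when writing out the interchanged inequality.
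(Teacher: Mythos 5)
Your proof is correct and follows essentially the same route as the paper's: the implication $(\mathrm{II})\Rightarrow(\mathrm{I})$ is immediate, and $(\mathrm{I})\Rightarrow(\mathrm{II})$ is handled by the two comparability cases with the symmetry of $d$ settling the case $(gv,gu)\in\mathcal{R}$. Your explicit verification that $\mathcal{M}_f(gu,gv)=\mathcal{M}_f(gv,gu)$ is a detail the paper leaves implicit (it cites only the symmetry of the metric), so your write-up is in fact slightly more complete, and your observation that neither $(\Phi_1)$ nor $(\Phi_2)$ is needed is accurate.
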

		
		\begin{proof}
			The implication $(II)\Rightarrow (I)$ is straightforward.
			
			\noindent To show that $(I)\Rightarrow (II)$,
			choose $u,v\in X$ such that $[gu,gv]\in \mathcal{R}$. If $(gu,gv)\in \mathcal{R}$, then $(II)$ immediately follows from $(I)$. Otherwise, if $(gv,gu)\in \mathcal{R}$, then by $(I)$ and the symmetry of metric $d$, we obtained the conclusion.
		\end{proof}

		\vspace{.3cm}
		 For the sake of completeness, we state the following theorems:
	
		\begin{thm}\label{th1.1} \cite[Theorems 3.2]{Berzig2012}
			Let $(f,g)$ be a pair of self-mappings defined on a metric space $(X,d)$ equipped with a symmetric closure $\mathcal{S}:=\mathcal{R}\cup \mathcal{R}^{-1}$ of any binary relation $\mathcal{R}.$ Suppose the following conditions hold:
			\begin{enumerate}
				\item [$(a)$]$(X,d)$ is complete;
				\item [$(b)$]there exists $w_0\in X$ such that $(gw_0,fw_0)\in \mathcal{S};$
				\item [$(c)$]$\mathcal{S}$ is $(f,g)$ closed;
				\item [$(d)$]$(X,d,\mathcal{S})$ is regular;
				\item [$(e)$]there exists $\varphi\in \Phi$ such that  $d(fu,fv)\leq \varphi(\mathcal{N}_{f}(gu,gv))$ ~ for all $u,v \in X$ with $(gu,gv)\in \mathcal{S}.$
			\end{enumerate}
			Then $(f,g)$ has a unique coincidence point. Moreover, if $C(f,g)$  is $(g,\mathcal{S})$-directed and $(f,g)$ is weakly compatible, then $(f,g)$ has a unique common fixed point.
		\end{thm}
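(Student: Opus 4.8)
The plan is to run a Jungck-type iteration adapted to the relation $\mathcal{S}$. Using that $f(X)\subseteq g(X)$ (so that the iteration is well defined), I would start from the point $w_0$ supplied by $(b)$ and choose, for each $n$, a point $w_{n+1}$ with $gw_{n+1}=fw_n$; Lemma \ref{lm1} lets me take all the $w_n$ inside a set $Z$ on which $g$ is one-one with $g(Z)=g(X)$, which will matter for uniqueness. Setting $x_n:=gw_n=fw_{n-1}$, the first task is to verify that $\{x_n\}$ is $\mathcal{S}$-preserving: the base case $(gw_0,gw_1)=(gw_0,fw_0)\in\mathcal{S}$ is precisely $(b)$, and induction through the $(f,g)$-closedness $(c)$ (Definition \ref{3.6}) propagates $(x_n,x_{n+1})\in\mathcal{S}$. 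If $x_n=x_{n+1}$ for some $n$ then $w_n\in C(f,g)$ and we are done, so I may assume $d_n:=d(x_n,x_{n+1})>0$ throughout.

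Next I would establish the estimate $d_{n+1}\le\varphi(d_n)$. Substituting $fw_n=x_{n+1}$ and $fw_{n+1}=x_{n+2}$ into $\mathcal{N}_f(gw_n,gw_{n+1})$ and bounding the mixed term $\tfrac12 d(x_n,x_{n+2})\le\tfrac12(d_n+d_{n+1})$ by the triangle inequality reduces it to $\max\{d_n,\tfrac12(d_n+d_{n+1})\}$. Plugging this into $(e)$ and using $\varphi(s)<s$ from Lemma \ref{lm2} excludes the case $d_{n+1}>d_n$; hence $\{d_n\}$ is non-increasing, so $\mathcal{N}_f(gw_n,gw_{n+1})=d_n$ and $d_{n+1}\le\varphi(d_n)$. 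Monotonicity of $\varphi$ then yields $d_n\le\varphi^n(d_0)$, and $(\Phi_2)$ makes $\sum_n d_n$ convergent, so $\{x_n\}$ is an $\mathcal{S}$-preserving Cauchy sequence. By completeness $(a)$ it converges, and since it lies in $g(X)$ its limit may be written as $z=gu$ for some $u\in X$.

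The crux is to pass from $gu=z=\lim gw_n$ to the coincidence $fu=gu$, and to do so without any continuity of $\varphi$. Regularity $(d)$ furnishes $(gw_n,gu)\in\mathcal{S}$, so, $\mathcal{S}$ and $d$ being symmetric, condition $(e)$ applies to this pair and gives $d(x_{n+1},fu)=d(fw_n,fu)\le\varphi\big(\mathcal{N}_f(gw_n,gu)\big)$. As $n\to\infty$ each entry of $\mathcal{N}_f(gw_n,gu)$ converges and $\mathcal{N}_f(gw_n,gu)\to\tfrac12 d(z,fu)$; I would convert this into an eventual strict bound $\mathcal{N}_f(gw_n,gu)\le\tfrac34\,d(z,fu)$, so that $\varphi$-monotonicity together with $\varphi(s)<s$ forces $d(z,fu)\le\varphi\big(\tfrac34 d(z,fu)\big)<d(z,fu)$ unless $d(z,fu)=0$. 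Thus $fu=z=gu$, i.e. $u\in C(f,g)$. This limit step, where the absence of continuity of $\varphi$ must be offset by the strict inequality of Lemma \ref{lm2}, is the main obstacle.

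For the remaining assertions I would first prove uniqueness of the point of coincidence: given $u,u'\in C(f,g)$, the $(g,\mathcal{S})$-directedness of $C(f,g)$ (Definition \ref{3.14}) yields $w$ with $(gu,gw),(gu',gw)\in\mathcal{S}$; iterating $g(\cdot)\mapsto f(\cdot)$ from $w$ produces an $\mathcal{S}$-preserving sequence whose $g$-images converge simultaneously to $gu$ and to $gu'$ (again through the $\varphi^k$ estimate), forcing $gu=gu'$ and hence $fu=fu'$, while the injectivity of $g$ on $Z$ from Lemma \ref{lm1} upgrades this to uniqueness of the coincidence point. Finally, writing $v=fu=gu$, weak compatibility gives $fv=f(gu)=g(fu)=gv$, so $v$ is itself a coincidence point; uniqueness of the point of coincidence then forces $fv=gv=v$, so $v$ is a common fixed point, and any common fixed point, being a point of coincidence, must equal $v$, giving uniqueness.
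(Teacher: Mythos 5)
The paper never proves Theorem \ref{th1.1} directly: it is quoted from Berzig as background, and is recovered only as a special case of Theorems \ref{th1} and \ref{th2} (via Corollary \ref{cor3}). Your direct argument reproduces essentially the same machinery --- the Picard--Jungck iteration made $\mathcal{S}$-preserving through $(b)$ and $(c)$, ruling out $d_{n+1}>d_n$ so that $\mathcal{N}_f(gw_n,gw_{n+1})=d_n$, the Cauchy estimate from $\sum_n\varphi^n(d_0)<\infty$, the regularity step with the eventual bound $\mathcal{N}_f(gw_n,gu)\le\tfrac{3}{4}\delta$ (the paper's proof of Theorem \ref{th1} uses the identical device with $\tfrac{4}{5}\delta$), and the directedness-plus-weak-compatibility endgame of Theorem \ref{th2} --- so methodologically you are aligned. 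But there is a genuine gap at the limit stage: the inference ``the sequence lies in $g(X)$, so its limit may be written as $z=gu$'' is false unless $g(X)$ is closed, since limits of sequences in $g(X)$ need not belong to $g(X)$. You rightly noticed that $f(X)\subseteq g(X)$ must be smuggled in (it is absent from the quoted statement), but the companion hypothesis that legitimizes this step --- closedness of $g(X)$ in Berzig's original, or the paper's conditions $(f)$, $(g)$, $(k_1)$ supplying an $\mathcal{R}$-complete subspace $Y$ with $f(X)\subseteq Y\subseteq g(X)$ --- is equally missing, and regularity $(d)$ cannot substitute for it, because the contraction $(e)$ applies only to pairs of the form $(gu,gv)$, which $z$ need not furnish.

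The second genuine flaw is the claimed upgrade from ``unique point of coincidence'' to ``unique coincidence point'' via Lemma \ref{lm1}. That lemma gives injectivity of $g$ only on a subset $Z$ with $g(Z)=g(X)$; nothing places an arbitrary coincidence point in $Z$, so from $gu=gu'$ you cannot conclude $u=u'$. The paper is explicit on this point: Theorem \ref{th3} adds hypothesis $(s)$ (one of $f$, $g$ is one-to-one) precisely to obtain a unique coincidence point, while Theorems \ref{th2} and \ref{th4} settle for a unique point of coincidence and a unique common fixed point. Note also that you invoke the $(g,\mathcal{S})$-directedness of $C(f,g)$ --- available only in the ``Moreover'' clause --- to prove the first assertion (and your reading of directedness as relating $gu$ rather than $u$ itself is the workable one; cf.\ Remark \ref{rem4}). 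In fact the first assertion as quoted is irreparable: take $X=\{0,1\}$ with the discrete metric, $f\equiv 0$, $g\equiv 0$, $\mathcal{R}=\{(0,0)\}$; all of $(a)$--$(e)$ hold, yet both points are coincidence points. So that defect originates in the (mis)quoted statement itself, but your Lemma \ref{lm1} manoeuvre does not repair it and should be dropped rather than patched; the honest conclusions obtainable by your method are exactly those of the paper's Theorems \ref{th2} and \ref{th4}.
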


		\begin{thm}\label{th1.2} \cite[Theorem 2]{Alamimdad2}
			Let $(f,g)$ be a pair of self-mappings defined on a metric space $(X,d)$ equipped with a binary relation  $\mathcal{R}$ and $Y$ a subspace of $X.$ Assume that the following conditions hold:
			\begin{enumerate}
				\item [$(f)$] $(Y,d)$ is $\mathcal{R}$-complete subspace of $X$;
				\item [$(g)$] $f(X)\subseteq Y\cap g(X)$;
				\item [$(h)$] $\exists~w_0\in X$ such that $(gw_0,fw_0)\in \mathcal{R};$
				\item [$(i)$] $\mathcal{R}$ is $(f,g)$-closed;
				\item [$(j)$] there exists $\alpha\in [0,1)$ such that $d(fu,fv)\leq \alpha d(gu,gv)$ ~for all $u,v\in X\;\textrm{with}\; (gu,gv)\in \mathcal{R}$;				
				\item [$(k)$] \begin{enumerate}
					\item [$(k_{1})$] $Y\subseteq g(X);$
					\item [$(k_{2})$] either $f$ is $(g,\mathcal{R})$-continuous or $f$ and $g$ are continuous or $\mathcal{R}|_Y$ is $d$-self-closed;
				\end{enumerate}
			\end{enumerate}
			\hspace{.5cm}or, alternatively
			\begin{enumerate}
				\item [$(l)$] \begin{enumerate}
					\item [$(l_1)$] $(f,g)$ is $\mathcal{R}$-compatible;
					\item [$(l_2)$] $g$ is $\mathcal{R}$-continuous;
					\item [$(l_3)$] $f$ is $\mathcal{R}$-continuous or $\mathcal{R}$ is $(g,d)$-self-closed.
				\end{enumerate}
			\end{enumerate}
			Then $(f,g)$ has a coincidence point.
		\end{thm}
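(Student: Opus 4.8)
The plan is to run the classical Jungck iteration inside the relation-theoretic framework, extract a limit from the $\mathcal{R}$-completeness of $Y$, and then peel off the two alternative hypothesis blocks $(k)$ and $(l)$ to pin down a coincidence point. First I would set up the iteration. Using Lemma \ref{lm1}, fix $Z\subseteq X$ with $g(Z)=g(X)$ and $g$ injective on $Z$; this makes each iterate unambiguous and, at the end, lets a point of coincidence be realised by an honest coincidence point. Starting from the $w_0$ furnished by $(h)$ and using $f(X)\subseteq g(X)$, choose $w_{n+1}\in Z$ with $gw_{n+1}=fw_n$ for every $n\ge 0$. Since $fw_n\in f(X)\subseteq Y$, the tail $\{gw_n\}_{n\ge 1}$ lies in $Y$.

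Next I would show $\{gw_n\}$ is $\mathcal{R}$-preserving and Cauchy. From $(h)$ we have $(gw_0,gw_1)=(gw_0,fw_0)\in\mathcal{R}$, and since $\mathcal{R}$ is $(f,g)$-closed, a one-step induction propagates this to $(gw_n,gw_{n+1})\in\mathcal{R}$ for all $n$. Inserting consecutive pairs into the contraction $(j)$ gives $d(gw_n,gw_{n+1})=d(fw_{n-1},fw_n)\le\alpha\,d(gw_{n-1},gw_n)$, hence $d(gw_n,gw_{n+1})\le\alpha^{n}d(gw_0,gw_1)$, and the standard geometric bound makes $\{gw_n\}$ Cauchy. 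Being an $\mathcal{R}$-preserving Cauchy sequence in $Y$, it converges by the $\mathcal{R}$-completeness in $(f)$ to some $z\in Y$; note $fw_n=gw_{n+1}\to z$ as well.

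Under block $(k)$ I would use $(k_1)$ to write $z\in Y\subseteq g(X)$ as $z=gu$ with $u\in Z$, and then close via $(k_2)$. If $f$ is $(g,\mathcal{R})$-continuous, then $gw_n\to gu$ along the $\mathcal{R}$-preserving sequence $\{gw_n\}$ forces $fw_n\to fu$, so comparing with $fw_n\to z=gu$ yields $fu=gu$. If $\mathcal{R}|_Y$ is $d$-self-closed, I would extract a subsequence with $[gw_{n_k},gu]\in\mathcal{R}$ and use $(j)$ together with the symmetry of $d$ to get $d(fw_{n_k},fu)\le\alpha\,d(gw_{n_k},gu)\to 0$, again giving $fu=gu$; the remaining alternative that $f,g$ are continuous is handled by the same limiting scheme together with Remark \ref{rmk2}. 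In each case $u\in C(f,g)$.

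Under block $(l)$ I would replace $(k_1)$ by $\mathcal{R}$-compatibility. The $\mathcal{R}$-continuity of $g$ in $(l_2)$, applied to the $\mathcal{R}$-preserving sequence $\{gw_n\}$, gives $g(gw_n)\to gz$ and hence $g(fw_n)=g(gw_{n+1})\to gz$. If $f$ is $\mathcal{R}$-continuous, then $f(gw_n)\to fz$, and $(l_1)$ forces $d(gz,fz)=\lim_n d(g(fw_n),f(gw_n))=0$, so $z\in C(f,g)$; if instead $\mathcal{R}$ is $(g,d)$-self-closed, I would pass to a subsequence with $[g(gw_{n_k}),gz]\in\mathcal{R}$, use $(j)$ to deduce $f(gw_{n_k})\to fz$, and conclude $d(gz,fz)=0$ from $(l_1)$ along that subsequence. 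The routine parts are the geometric Cauchy estimate and the induction showing $\{gw_n\}$ is $\mathcal{R}$-preserving; the hard part will be the terminal case analysis, where one must consistently feed the single genuinely $\mathcal{R}$-preserving sequence $\{gw_n\}$ into each continuity, $\mathcal{R}$-continuity, compatibility and self-closedness hypothesis, track the $g$-shifted and doubly $g$-composed terms arising in block $(l)$, and invoke Lemma \ref{lm1} so that the point of coincidence is a true coincidence point.
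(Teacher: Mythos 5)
Your overall scheme coincides with the paper's (the paper does not reprove this cited theorem directly, but its Theorem \ref{th1} subsumes it via $\varphi(t)=\alpha t$, and its proof is exactly this architecture): the Picard--Jungck iteration $gw_{n+1}=fw_n$, the induction via $(f,g)$-closedness showing $\{gw_n\}$ is $\mathcal{R}$-preserving, the geometric Cauchy estimate, convergence in $Y$ by $\mathcal{R}$-completeness, and the terminal case analysis over $(k)$ and $(l)$ all match. In the $d$-self-closed and $(g,d)$-self-closed branches your direct estimates $d(fw_{n_k},fu)\le \alpha\, d(gw_{n_k},gu)\to 0$ and $d(f(gw_{n_k}),fz)\le \alpha\, d(g(gw_{n_k}),gz)\to 0$ are correct (the symmetry of $d$ handling $[\cdot,\cdot]\in\mathcal{R}$ as in Proposition \ref{p2}), and are in fact cleaner than the paper's contradiction device with $\frac{4}{5}\delta$, which is only needed for a general $\varphi\in\Phi$; your treatment of block $(l)$ via $\mathcal{R}$-compatibility is likewise sound.

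The genuine gap is the subcase of $(k_2)$ in which $f$ and $g$ are merely continuous. Your claim that it is ``handled by the same limiting scheme together with Remark \ref{rmk2}'' fails: Remark \ref{rmk2} upgrades continuity to $\mathcal{R}$-continuity, but $\mathcal{R}$-continuity of $f$ permits passage to the limit only along an $\mathcal{R}$-preserving sequence of \emph{arguments} $u_n\to u$, whereas all you know is $gw_n\to gu$, not $w_n\to u$ ($g^{-1}$ need not be continuous, even restricted to $Z$). Applying $\mathcal{R}$-continuity to the sequence $\{gw_n\}$ itself yields $f(gw_n)\to f(gu)$, a statement about $f\circ g$ that cannot be related to $fw_n=gw_{n+1}$ because block $(k)$ carries no compatibility hypothesis. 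This is precisely where the paper puts Lemma \ref{lm1} to real work: it defines $h:g(Z)\to g(X)$ by $h(gz)=fz$, well defined since $g$ is one-one on $Z$ and $f(X)\subseteq g(Z)$, argues that $h$ is continuous when $f$ and $g$ are, and concludes $fu=h(gu)=h\big(\lim_{n\to\infty} gw_n\big)=\lim_{n\to\infty}h(gw_n)=\lim_{n\to\infty}fw_n=gu$. You invoke Lemma \ref{lm1} only to make the iterates unambiguous, so as written your proof covers two of the three alternatives in $(k_2)$ and all of $(l)$, but the continuity alternative remains open until you add the $h$-map argument or an equivalent substitute.
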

		\vspace{.3cm}
		Indeed, the main results of this paper are based on the following points:
		\begin{itemize}
			\item Theorem \ref{th1.1} is improved by replacing symmetric closure $\mathcal{S}$ of any binary relation with arbitrary binary relation $\mathcal{R}$,
			\item Theorems \ref{th1.1} (upto coincidence point) and \ref{th1.2} are unified by replacing more general contraction condition,
			\item Theorem \ref{th1.1} is generalized by replacing comparatively weaker notions namely $\mathcal{R}$-completeness of any subspace $Y\subseteq X$, with $fX\subseteq Y\cap gX$ rather than completeness of whole space $X$,
			\item Theorem \ref{th1.1} is improved by replacing $d$-self-closedness or $(g,d)$-self-closedness of $\mathcal{R}$ instead of regularity of the whole space,
			\item some examples are addopted to demonstrate the realized improvement in the results proved  in this article.
		\end{itemize}
		
		\section{Main Results}
		Now, we are equipped to prove our main result as follows:
		\begin{thm}\label{th1}
			Let $(f,g)$ be a pair of self-mappings defined on a metric space $(X,d)$ equipped with a binary relation  $\mathcal{R}$. Assume that the conditions $(f),(g),(h),(i)$ and $(k)$ $(or ~(l))$ together with the following condition holds:
			\begin{enumerate}
				\item [$(m)$] there exists $\varphi\in\Phi $ such that $d(fu,fv)\leq \varphi( \mathcal{M}_{f}(gu,gv))$ ~(for all $u,v\in X\;\textrm{with}\; (gu,gv)\in \mathcal{R}).$
			\end{enumerate}
			Then $(f,g)$ has a coincidence point.
		\end{thm}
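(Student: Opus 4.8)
The plan is to build a Jungck--Picard sequence, show its $g$-image is $\mathcal{R}$-preserving, Cauchy and convergent inside the subspace $Y$, and then extract a coincidence point from its limit. First I would use condition $(g)$, which gives $f(X)\subseteq Y\cap g(X)$, to start from a point $w_0$ as in $(h)$ and define recursively $\{w_n\}$ by $gw_{n+1}=fw_n$ for every $n\in\mathbb{N}_0$; this is legitimate since $fw_n\in f(X)\subseteq g(X)$, and moreover $gw_{n+1}=fw_n\in Y$, so $\{gw_n\}_{n\ge 1}\subset Y$. Beginning with $(gw_0,fw_0)=(gw_0,gw_1)\in\mathcal{R}$ from $(h)$ and using that $\mathcal{R}$ is $(f,g)$-closed $(i)$, a routine induction gives $(gw_n,gw_{n+1})\in\mathcal{R}$ for all $n$, i.e. $\{gw_n\}$ is $\mathcal{R}$-preserving.

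Next I would establish the Cauchy property. Writing $d_n:=d(gw_n,gw_{n+1})$ and substituting $fw_n=gw_{n+1}$ into $\mathcal{M}_f$, the mixed term $\frac{1}{2}[d(gw_n,gw_{n+2})+d(gw_{n+1},gw_{n+1})]$ is absorbed by the triangle inequality, so $\mathcal{M}_f(gw_n,gw_{n+1})$ collapses to $\max\{d_n,d_{n+1}\}$. Condition $(m)$ then reads $d_{n+1}\le\varphi(\max\{d_n,d_{n+1}\})$; were $d_{n+1}$ the larger, Lemma \ref{lm2} would force $d_{n+1}\le\varphi(d_{n+1})<d_{n+1}$, a contradiction, so $d_{n+1}\le\varphi(d_n)$ and hence $d_n\le\varphi^n(d_0)$ by iterating the monotone $\varphi$. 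The summability criterion $(\Phi_2)$ makes $\sum_n d_n$ convergent, giving the Cauchy property in the standard way. By $\mathcal{R}$-completeness of $Y$ $(f)$, the $\mathcal{R}$-preserving Cauchy sequence $\{gw_n\}\subset Y$ converges to some $z\in Y$, and $fw_n=gw_{n+1}\to z$ as well.

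It remains to produce the coincidence point, where the argument branches. Under $(k)$, condition $(k_1)$ gives $z\in Y\subseteq g(X)$, so $z=gu$; if $f$ is $(g,\mathcal{R})$-continuous I would transport this along the constructed sequence to get $fw_n\to fu$, whence $fu=\lim gw_{n+1}=z=gu$; if $\mathcal{R}|_Y$ is $d$-self-closed I would extract a subsequence with $[gw_{n_k},gu]\in\mathcal{R}$, invoke Proposition \ref{p2} to apply $(m)$ to this comparable pair, and pass to the limit. The crucial observation is that $d(gu,fu)$ is itself one of the four entries of $\mathcal{M}_f(gw_{n_k},gu)$, while the other three tend to $0,0$ and $\frac{1}{2}d(gu,fu)$, all strictly below $d(gu,fu)$; hence $\mathcal{M}_f(gw_{n_k},gu)=d(gu,fu)$ for large $k$, so $d(fw_{n_k},fu)\le\varphi(d(gu,fu))<d(gu,fu)$, contradicting $fw_{n_k}\to gu$ unless $d(gu,fu)=0$. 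Under the alternative $(l)$, I would use that $\{gw_n\}$ and $\{fw_n\}$ are $\mathcal{R}$-preserving with common limit $z$: $\mathcal{R}$-continuity of $g$ $(l_2)$ gives $g(gw_n)\to gz$ and $g(fw_n)\to gz$, $\mathcal{R}$-compatibility $(l_1)$ gives $d(g(fw_n),f(gw_n))\to 0$ so that $f(gw_n)\to gz$, and then either $\mathcal{R}$-continuity of $f$ forces $fz=gz$ at once, or $(g,d)$-self-closedness yields a comparable subsequence on which the same $\mathcal{M}_f$-collapse argument gives $fz=gz$.

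The step I expect to be the main obstacle is the case inside $(k)$ where $f$ and $g$ are merely continuous (not $\mathcal{R}$-continuous), since ordinary continuity of $f$ does not transfer along $\{gw_n\}$, whose preimages need not converge. Here I would invoke Lemma \ref{lm1} to select $Z\subseteq X$ with $g(Z)=g(X)$ and $g|_Z$ injective, relocate the sequence so that $w_n\in Z$, and define the induced self-map $T$ of $g(X)$ by $T(gx)=fx$ for $x\in Z$, turning $\{gw_n\}$ into a Picard sequence of $T$ whose limit yields the coincidence point through the transported continuity. The other genuinely delicate point, pervasive throughout, is that $\varphi$ is only assumed increasing and summable and need not be continuous, so each passage to the limit must be arranged, as above, by showing that $\mathcal{M}_f$ is eventually exactly equal to the target distance rather than by appealing to continuity of $\varphi$.
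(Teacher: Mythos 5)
Your proposal is correct and follows essentially the same route as the paper's own proof: the same Jungck--Picard sequence with the collapse of $\mathcal{M}_{f}(gw_n,gw_{n+1})$ to $\max\{d_n,d_{n+1}\}$, the same use of Lemma \ref{lm1} to transfer ordinary continuity of $f$ and $g$ to the induced map on $g(Z)$, and the same case analysis under $(k)$ and $(l)$ with Proposition \ref{p2} applied along the comparable subsequence. Your limit step is in fact a slight streamlining of the paper's: you observe that $\mathcal{M}_{f}$ eventually equals $\delta=d(gu,fu)$ exactly, where the paper instead bounds it by $\tfrac{4}{5}\delta$ before invoking monotonicity of $\varphi$ --- the same idea, correctly avoiding any continuity assumption on $\varphi$.
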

		\begin{proof}
			Let $w_0\in X$ such that $(gw_0,fw_0)\in \mathcal{R}$. Construct a Picard Jungck sequence $\{gw_n\}$, with the initial point $w_{0},~ i.e.,$ \begin{equation}\label{1}
			gw_{n+1}=fw_{n},\; \;{\rm for~all}~ n\in \mathbb{N}_0.
			\end{equation}
			Also as $(gw_0,fw_0)\in \mathcal{R}$ and $\mathcal{R}$ is $(f,g)$-closed, we have
			$$(fw_0,fw_1),(fw_2,fw_3),\cdots ,(fw_n,fw_{n+1}),\cdots\in \mathcal{R}.$$
			Thus,
			$$(gw_n,gw_{n+1})\in \mathcal{R},\;\;{\rm for ~all}~n\in \mathbb{N}_0,\eqno(2)$$
			therefore $\{gw_n\}$ is $\mathcal{R}$-preserving.
			From condition $(m)$, we have (for all $n\in \mathbb{N}$)
			$$d(gw_n,gw_{n+1})=d(fw_{n-1},fw_{n})\leq \varphi( \mathcal{M}_{f}(gw_{n-1},gw_{n}))\eqno(3)$$
			where, $$\mathcal{M}_{f}(gw_{n-1},gw_{n})\leq max \big\{d(gw_{n-1},gw_{n}),d(gw_{n-1},fw_{n-1}),d(gw_n,fw_n), \frac{1}{2}\big[d(gw_n,fw_{n-1})+d(gw_{n-1},fw_n)\big]\big\}$$
			on using (1) and tringular inequality, we have (for all $n\in \mathbb{N}$)
			$$\mathcal{M}_{f}(gw_{n-1},gw_{n})\leq max \big\{d(gw_{n-1},gw_{n}), d(gw_n,gw_{n+1})\big\}.\eqno(4)$$
			On using (3), (4) and the property $(\Phi_1)$, we obtain (for all $n\in \mathbb{N}$)
			$$d(gw_n,gw_{n+1})\leq \varphi\big( max \big\{d(gw_{n-1},gw_{n}), d(gw_n,gw_{n+1})\big\}\big).\eqno(5)$$
			Now, we show that the sequence $\{gw_n\}$ is Cauchy in $(X,d)$. In case $gw_{n_{0}}=gw_{n_{0}+1}$ for some $n_0\in \mathbb{N}_0,$ then the result is follows. Otherwise, $gw_n \neq gw_{n+1}$ for all $n\in \mathbb{N}_0.$
			Suppose that $d(gw_{n_1-1},gw_{n_1})\leq d(gw_{n_1},gw_{n_1+1}),~ {\rm for ~some}~ {n_1}\in \mathbb{N}$. On using (5) and Lemma \ref{lm1}, we get
			$$d(gw_{n_1},gw_{{n_1}+1})\leq\varphi( d(gw_{n_1},gw_{{n_1}+1}) )< d(gw_{n_1},gw_{{n_1}+1}),$$
			which is a contradiction. Thus $d(gw_n,gw_{n+1})< d(gw_{n-1},gw_{n})$  (for all $n\in \mathbb{N}$), so that
			$$d(gw_{n},gw_{n+1})\leq\varphi( d(gw_{n-1},gw_{n})),~ {\rm for ~all}~ n\in \mathbb{N}.$$
			Employing induction on $n$ and the property $(\Phi_1)$, we get
			$$d(gw_{n},gw_{n+1})\leq\varphi^{n}( d(gw_0,gw_1)),~ {\rm for ~all}~ n\in \mathbb{N}_0.$$
			\iffalse Passing $n\to \infty$ and using the property $(\Phi_2)$, we have
			$$\displaystyle \lim_{n\to \infty}d(w_{n},w_{n+1})=0$$
			\fi
			Now, for all $ m,n\in \mathbb{N}_0$ with $m\geq n,$ we have
			\begin{eqnarray*}
				\nonumber d(gw_{n},gw_{m})&\leq& d(gw_{n},gw_{n+1})+d(gw_{n+1},gw_{n+2})+\cdots+d(gw_{m-1},gw_{m})\\
				&\leq&\varphi^{n}(d(gw_0,gw_1))+\varphi^{n+1}(d(gw_0,gw_1))+\cdots+\varphi^{m-1}(d(gw_0,gw_1))\\
				&=& \sum\limits_{k=n}^{m-1} \varphi^{k}(d(gw_0,gw_1))\\
				&\leq& \sum\limits_{k\geq n} \varphi^{k}(d(gw_0,gw_1))\\
				&\rightarrow& 0\;{\rm as}\; n\rightarrow \infty.
			\end{eqnarray*}
			Therefore, $\{gw_n\}$ is $\mathcal{R}$-preserving Cauchy sequence in $X$.
			As $\{gw_n\}\subseteq g(X)$ and $\{gw_n\}\subseteq Y\subseteq g(X)$ (due to (1)  and $(k_1)$), therefore $\{gw_n\}$ is $\mathcal{R}$-preserving Cauchy sequence in $Y.$ Since $(Y,d)$ is $\mathcal{R}$-complete, there exists $y\in Y$ such that $gw_n\stackrel{d}{\longrightarrow} y$. As $Y\subseteq g(X),$ there exists $x\in X$ such that $$ \displaystyle\lim_{n\to\infty}gw_n=y=gx.\eqno(6)$$
			%such that
			\vspace{.3cm}
			Since $f$ is $(g,\mathcal{R}$)-continuous, and on using (1) and (6), we have
			$$\displaystyle\lim_{n\to\infty}gw_{n+1}=\displaystyle\lim_{n\to\infty}fw_{n}=fx.$$
			Due to uniqueness of the limit, we have $fx=gx.$
			Hence $x$ is a coincidence point of $(f,g)$.
			
			Next, we assume that $f$ and $g$ are continuous. From Lemma \ref{lm1}, there exists a subset $Z\subseteq X$ such that $g(Z)=g(X)$ and $g: Z\rightarrow X$ is one-one. Now, define $h:g(Z)\rightarrow g(X)$ by $$h(g(z))=fz ~\forall~gz\in g(Z),~z\in Z.\eqno(7)$$
			Since $g$ is one-one and $f(X)\subseteq g(Z)$, $h$ is well defined. As $f$ and $g$ are continuous, so is $h.$ By utilizing the fact that $g(Z)=g(X)$
			and condition $(g)$ and $(k_1)$, we have $f(X)\subseteq g(Z)\cap Y$ and $Y\subseteq g(X)$ which guaranty that availability of a sequence $\{w_n\}\subset Z$
			satisfying (1). Take $x\in Z,$ on using (6), (7) and continuity of $h$, we get
			$$fx=h(gx)=h(\displaystyle\lim_{n\to\infty}gw_n)=\displaystyle\lim_{n\to\infty}h(gw_n)=\displaystyle\lim_{n\to\infty}fw_n=gx.$$\
			Hence $x$ is a coincidence point of $(f,g)$.
			
			Finally, if $\mathcal{R}|_Y$ is $d$-self-closed, then for any $\mathcal{R}$-preserving sequence
			$\{gw_n\}$ in $Y$ with $gw_n\stackrel{d}{\longrightarrow} gx$, there is a subsequence
			$\{gw_{n_k}\}{\rm \;of\;} \{gw_n\} \;{\rm
				such~that}\;\;[gw_{n_k},gx]\in\mathcal{R}|_Y\subseteq \mathcal{R},~{\rm for~all}~ k\in \mathbb{N} _0.$

			\vspace{.3cm}
			Set $\delta:=d(fx,gx)\geq0$. Suppose on contrary that $\delta>0.$ On using condition $(m)$,
			Proposition \ref{p2} and $[gw_{n_k},gx]\in\mathcal{R},$ for all $k\in \mathbb{N} _0,$ we have
			$$d(gw_{{n_k}+1},fx)=d(fw_{{n_k}},fx)\leq \varphi( \mathcal{M}_{f}(gw_{{n_k}},gx)),\eqno(8)$$
			where, $$\mathcal{M}_{f}(gw_{{n_k}},gx)= max\big\{d(gw_{{n_k}},gx), d(gw_{{n_k}},gw_{{n_k}+1}), d(gx,fx), \frac{1}{2}[d(gw_{{n_k}},fx) +d(gx,gw_{{n_k}+1})]\big\}.$$
			If $\mathcal{M}_{f}(gw_{{n_k}},gx)=d(gx,fx)=\delta,$ then (8) reduces to
			$$d(gw_{{n_k}+1},fx)\leq \varphi(\delta),$$
			which on making $k \to \infty$, gives arise
			$$\delta\leq \varphi(\delta),$$
			which is a contradiction.
			
			Otherwise, if $\mathcal{M}_{f}(gw_{{n_k}},gx)= max\big\{d(gw_{{n_k}},gx), d(gw_{{n_k}},gw_{{n_k}+1}),
			\frac{1}{2}[d(gw_{{n_k}},fx) +d(gx,gw_{{n_k}+1})]\big\},$ then due to the fact that $gw_n\stackrel{d}{\longrightarrow} gx$,
			\iffalse
			on making $k \to \infty$ in the both sides, we get
			$$\displaystyle \lim_{k\to\infty}\mathcal{M}_{f}(x_{{n_k}},gw)=\frac{\delta}{2}.$$
			By the definition of limit,
			\fi
			there exists a positive integer $N=N(\delta)$ such that
			$$\mathcal{M}_{f}(gw_{{n_k}},gx)\leq\frac{4}{5}\delta, ~{\rm {for~ all}}~k\geq N.$$
			As $\varphi$ is increasing, we have
			$$\varphi(\mathcal{M}_{f}(gw_{{n_k}},gx))\leq\varphi(\frac{4}{5}\delta), ~{\rm {for~ all}}~k\geq N.\eqno(9)$$
			On using (8) and (9), we get
			$$d(gw_{{n_k}+1},fx)=d(fw_{{n_k}},fx)\leq\varphi(\frac{4}{5}\delta), ~{\rm {for~ all}}~k\geq N.$$
			Letting $k \to \infty$ and using Lemma \ref{lm2}, we get
			$$\delta\leq\varphi(\frac{4}{5}\delta)<\frac{4}{5}\delta<\delta,$$
			which is again a contradiction. Hence, $\delta =0$, so that $d(fx,gx)=\delta =0\Rightarrow fx=gx.$\\
			Thus, $x$ is a coincidence point of $(f,g)$.

			\vspace{.3cm}
			Alternatively, we suppose that $(l)$ holds. Firstly, we suppose that $f$ is $\mathcal{R}$-continuous. As $\{gw_n\}\subset f(X)\subseteq Y$ (in view (1)) we notice that $\{gw_n\}$ is $\mathcal{R}$-preserving Cauchy sequence in $Y.$ Since $Y$ is $\mathcal{R}$-complete, there exists $y\in Y$ such that
			$$\displaystyle\lim_{n\to\infty}gw_n=y ~ {\rm \;and \;} \displaystyle\lim_{n\to\infty}fw_n=y.\eqno(10)$$\\
			As $\{fw_n\}$ and $\{gw_n\}$ are $\mathcal{R}$-preserving sequence (due to (1) and (2)), utilizing the condition $(l_1)$ and (10), we obtain
			$$\displaystyle\lim_{n\to\infty}d(gfw_n,fgw_n)=0.\eqno(11)$$\\
			Using (2), (10), $(l_2)$ and due to $f$ is $\mathcal{R}$-continuous, we have $$\displaystyle\lim_{n\to\infty}g(fw_n)=g(\displaystyle\lim_{n\to\infty}fw_n)=gy,\eqno(12)$$\\
			and $$\displaystyle\lim_{n\to\infty}f(gw_n)=f(\displaystyle\lim_{n\to\infty}gw_n)=fy.\eqno(13)$$
			On using (11)--(13) and continuity of $d,$ we have $fy$=$gy.$ Hence $y$ is a coincidence point of $(f,g)$.
			
		\iffalse	In order to prove $fy=gy,$ applying (11)-(13) and continuity of $d,$ we have
			
			\begin{eqnarray*}
				% \nonumber to remove numbering (before each equation)
				d(fy,gy)&=&d(\displaystyle\lim_{n\to\infty}fgw_n,\displaystyle\lim_{n\to\infty}gfw_n)\\
				&=&\displaystyle\lim_{n\to\infty}d(fgw_n,gfw_n)\\
				&=& 0,
			\end{eqnarray*}
			which amounts to saying that $fy$=$gy.$
			
			\fi

			\vspace{0.5mm}
			Lastly, assume that $\mathcal{R}$ is $(g,d)$-self-closed. As $\{gw_n\}$ is $\mathcal{R}$-preserving
		    and $gw_n\stackrel{d}{\longrightarrow}y $ (due to $(g,d)$-self-closedness of $\mathcal{R}$), there exists a subsequence $\{gw_{n_k}\}$ of $\{gw_n\}$
			such that $$[ggw_{n_k},gy]\in \mathcal{R}, ~\forall~ k\in \mathbb{N}_0.$$
			Since $gw_{n}\stackrel{d}{\longrightarrow}y $, therefore $gw_{n_k}\stackrel{d}{\longrightarrow}y $  for any subsequence $\{gw_{n_k}\}$  of $\{gw_{n}\}.$ \\
		
			Set $\eta:= d(fy,gy)\geq 0.$  Suppose on contrary that $\eta>0.$ On utilizing the condition $(m)$,
			Proposition \ref{p2} and $[ggw_{n_k},gy]\in\mathcal{R},$ for all $k\in \mathbb{N} _0,$ we have
			$$d(fgw_{{n_k}+1},fy)\leq \varphi( \mathcal{M}_{f}(ggw_{{n_k+1}},gy)),\eqno(14)$$
			\begin{multline*}
			{\rm  where},\:~ \mathcal{M}_{f}(ggw_{{n_k+1}},gy)= max\big\{d(ggw_{{n_k+1}},gy), d(ggw_{{n_k+1}},fgw_{{n_k}+1}),\\
			d(gy,fy), \frac{1}{2}[d(ggw_{{n_k+1}},fy) +d(gy,fgw_{{n_k}+1})]\big\}.
			\end{multline*}
			If $\mathcal{M}_{f}(ggw_{{n_k}},gy)=d(gy,fy)=\eta,$ then (14) yields
			$$|d(fgw_{{n_k}+1},gfw_{{n_k}+1})-d(gfw_{{n_k}+1},fy)|\leq d(fgw_{{n_k}+1},fy)\leq \varphi(\eta),$$
		%	therefore, we get $$|d(fgw_{{n_k}+1},gfw_{{n_k}+1})-d(gfw_{{n_k}+1},fy)|\leq \varphi(\eta).$$
		on making $k \to \infty$; using (10), (11), continuity of $d$ and $\mathcal{R}$-continuity of $g$, we get
			$$\eta\leq \varphi(\eta),$$ which is a contradiction.
			Therefore, $\eta =0$, so that
			$$d(fy,gy)=\eta =0\Rightarrow fy=gy.$$
			Hence $y$ is a coincidence point of $(f,g)$.
			
            Otherwise, let $\mathcal{M}_{f}(ggw_{{n_k+1}},gy)= max\big\{d(ggw_{{n_k+1}},gy), d(ggw_{{n_k+1}},fgw_{{n_k}+1}), \frac{1}{2}[d(ggw_{{n_k+1}},fy) +d(gy,fgw_{{n_k}+1})]\big\}.$ Now, on using triangular inequality, we have
			\begin{multline*}
			\mathcal{M}_{f}(ggw_{{n_k+1}},gy)\leq max\big\{d(ggw_{{n_k+1}},gy), d(ggw_{{n_k+1}},ggw_{{n_k}+2})\\ +d(ggw_{{n_k}+2},fgw_{{n_k}+1}),
			\frac{1}{2}[d(ggw_{{n_k+1}},fy) +d(gy,gfw_{{n_k}+1})+d(gfw_{{n_k}+1},fgw_{{n_k}+1})]\big\},
			\end{multline*}
			On making $k \to \infty$, on using (1), (10), (11), continuity of $d$ and $\mathcal{R}$-continuity of $g$ , we get
			$$\displaystyle\lim_{n\to \infty} \mathcal{M}_{f}(ggw_{{n_k+1}},gy)=\frac{1}{2}\eta.$$
			Since $\eta >0$. By definition, there exists a positive integer $N=N(\eta)$ such that
			$$\mathcal{M}_{f}(ggw_{{n_k+1}},gy)\leq\frac{4}{5}\eta, ~{\rm {for~ all}}~k\geq N.$$
			As $\varphi$ is increasing, we have
			$$\varphi(\mathcal{M}_{f}(ggw_{{n_k+1}},gy))\leq\varphi(\frac{4}{5}\eta), ~{\rm {for~ all}}~k\geq N,$$
			again (14) yields that
			$$|d(fgw_{{n_k}+1},gfw_{{n_k}+1})-d(gfw_{{n_k}+1},fy)|\leq d(fgw_{{n_k}+1},fy)\leq\varphi(\frac{4}{5}\eta), ~{\rm {for~ all}}~k\geq N.\eqno(15)$$
			Hence,
			$$|d(fgw_{{n_k}+1},gfw_{{n_k}+1})-d(gfw_{{n_k}+1},fy)|\leq (\frac{4}{5}\eta), ~{\rm {for~ all}}~k\geq N.$$
			Letting $k \to \infty$, on using (10), (11), continuity of $d$ and $\mathcal{R}$-continuity of $g$, we get
			$$\eta\leq\varphi(\frac{4}{5}\eta)<\frac{4}{5}\eta<\eta,$$
			which is again a contradiction. Hence, $\eta =0$, so that
			$$d(fy,gy)=\eta =0\Rightarrow fy=gy.$$
			Hence, $y$ is a coincidence point of $(f,g)$. This completes the proof.
		\end{proof}
		\vspace{.3cm}
		On account  taking $Y=X$ in Theorem \ref{th1}, we deduce a corollary which is sharpened version of Theorem \ref{th1.1} up to coincidence point in view of comparatively weaker notions in the considerations of completeness, regularity and contraction condition.
		
		\begin{cor}\label{cor0}
			Let $(f,g)$ be a pair of self-mappings defined on a metric space $(X,d)$ equipped with a binary relation  $\mathcal{R}.$
			Suppose that the conditions $(h),(i),(m)$ together with the following conditions hold:
		
			\begin{enumerate}
			\item [$(n)$] $(X,d)$ is $\mathcal{R}$-complete;
			\item [$(o)$] $f(X)\subseteq g(X)$;				
			\item [$(p)$] $g$ is onto together the condition $(k_{2})$ ~[or, alternatively condition $(l)$].
	\end{enumerate}
		Then $(f,g)$ has a coincidence point.
		\end{cor}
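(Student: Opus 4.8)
The plan is to obtain this corollary as an immediate specialization of Theorem \ref{th1}, taking the subspace $Y$ to coincide with the whole space $X$. Since the conclusion sought (existence of a coincidence point) is identical to that of Theorem \ref{th1}, the entire task reduces to checking that, under the substitution $Y = X$, the hypotheses $(n)$, $(o)$, $(p)$ of the corollary---together with the shared conditions $(h)$, $(i)$, $(m)$---reproduce precisely the full list $(f)$, $(g)$, $(h)$, $(i)$, $(k)$ (or $(l)$), $(m)$ demanded by the theorem.

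First I would match the structural hypotheses. Condition $(n)$ states that $(X,d)$ is $\mathcal{R}$-complete, which is exactly condition $(f)$ once we put $Y = X$. Because $Y \cap g(X) = X \cap g(X) = g(X)$, condition $(g)$ of the theorem becomes $f(X) \subseteq g(X)$, which is nothing but $(o)$. The remaining shared conditions $(h)$, $(i)$ and the contraction condition $(m)$ appear verbatim in both statements, so nothing needs to be done for them.

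The one step deserving attention is the reformulation of condition $(p)$. Its first clause asserts that $g$ is onto, i.e. $g(X) = X$; since $g(X) \subseteq X$ always holds, this is equivalent to $X \subseteq g(X)$, and with $Y = X$ this is precisely requirement $(k_1)$ that $Y \subseteq g(X)$. The second clause of $(p)$ is the conjunction $(k_2)$---where now $\mathcal{R}|_Y = \mathcal{R}|_X = \mathcal{R}$, so that $d$-self-closedness of $\mathcal{R}|_Y$ is simply $d$-self-closedness of $\mathcal{R}$---or alternatively condition $(l)$, carried over unchanged. Hence $(p)$ delivers exactly condition $(k)$ (or $(l)$) of Theorem \ref{th1}.

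With every hypothesis verified, Theorem \ref{th1} applies and produces a coincidence point of $(f,g)$, which is all that is claimed. I do not anticipate any genuine obstacle: the mathematical substance lies entirely in Theorem \ref{th1}, and the proof is a routine translation exercise. The only point requiring a moment of care is recognizing that surjectivity of $g$ is the correct specialization of the inclusion $Y \subseteq g(X)$ when $Y = X$, which is exactly what allows the completeness and regularity assumptions of Theorem \ref{th1.1} to be weakened here.
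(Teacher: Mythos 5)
Your proposal is correct and coincides with the paper's own derivation: the paper obtains Corollary \ref{cor0} precisely by setting $Y=X$ in Theorem \ref{th1}, and your hypothesis-matching ($(n)\leftrightarrow(f)$, $(o)\leftrightarrow(g)$ since $X\cap g(X)=g(X)$, surjectivity of $g\leftrightarrow(k_1)$, with $(k_2)$ or $(l)$ carried over verbatim) is exactly the translation the paper leaves implicit.
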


	\iffalse	\vspace{.3cm}
		In liu of Remarks \ref{rmk1} and \ref{rmk2}, we deduce the following more natural consequence of Theorem \ref{th1}.
		
		\begin{cor}\label{cor1}
			Let $(f,g)$ be a pair of self-mappings defined on a metric space $(X,d)$ equipped with a binary relation  $\mathcal{R}$ and $Y$ a subspace of $X$.  Assume that the conditions $(g),(h),(i),(m)$ and $(k)$ $(or ~(l))$ together with following condition holds:
		
			\begin{enumerate}
				\item [$(p)$] $(Y,d)$ is complete.
			\end{enumerate}
			Then $(f,g)$ has a coincidence point.
		\end{cor}
	\fi	
	In liu of Remarks \ref{rem3}, Theorem \ref{th1} reduces to the following corollary.
		\begin{cor}\label{cor2}
			Let $(f,g)$ be a pair of self-mappings defined on a metric space $(X,d)$ endowed with a binary relation  $\mathcal{R}$ and $Y$ be a subspace of $X.$ Assume the conditions $(f),(g),(h),(i)$ and $(k)$ $(or~(l))$ together with the following condition holds:
			\begin{enumerate}
				\item [$(q)$] there exists $\varphi\in\Phi $ such that (for~all~$ u,v\in X\;\textrm{with}\; (gu,gv)\in \mathcal{R}$)
				$$ d(fu,fv)\leq \varphi\big(max\big\{d(gu,gv), \frac{1}{2}[d(gu,fu)+d(gv,fv)],
				\frac{1}{2}[d(gu,fv)+d(gv,fu)]\big).$$
			\end{enumerate}
			Then $(f,g)$ has a coincidence point.
		\end{cor}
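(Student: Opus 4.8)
The plan is to deduce Corollary \ref{cor2} directly from Theorem \ref{th1} by recognizing that hypothesis $(q)$ is nothing but a special instance of hypothesis $(m)$. First I would observe that the quantity appearing inside $\varphi$ in condition $(q)$ is, by definition, precisely $\mathcal{N}_{f}(gu,gv)$. Hence $(q)$ may be rewritten compactly as
$$d(fu,fv)\leq\varphi(\mathcal{N}_{f}(gu,gv))\quad\textrm{for all } u,v\in X \textrm{ with } (gu,gv)\in\mathcal{R}.$$

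Next I would invoke Remark \ref{rem3}, which records the pointwise bound $\mathcal{N}_{f}(gu,gv)\leq\mathcal{M}_{f}(gu,gv)$ valid for all $u,v\in X$. Since every $\varphi\in\Phi$ is increasing by property $(\Phi_{1})$, applying $\varphi$ to this inequality preserves it, so that $\varphi(\mathcal{N}_{f}(gu,gv))\leq\varphi(\mathcal{M}_{f}(gu,gv))$ for all $u,v\in X$. Chaining the two estimates, for every $u,v\in X$ with $(gu,gv)\in\mathcal{R}$ we obtain
$$d(fu,fv)\leq\varphi(\mathcal{N}_{f}(gu,gv))\leq\varphi(\mathcal{M}_{f}(gu,gv)),$$
which is exactly the contraction condition $(m)$ realized with the same $\varphi$.

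At this point the verification is complete: the remaining hypotheses $(f),(g),(h),(i)$ and $(k)$ (or $(l)$) are assumed in Corollary \ref{cor2} identically to their statement in Theorem \ref{th1}, and we have just shown that $(q)$ forces $(m)$. Therefore Theorem \ref{th1} applies verbatim and delivers a coincidence point of the pair $(f,g)$, finishing the proof.

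I do not expect any genuine obstacle here. The entire argument rests on transferring the elementary bound of Remark \ref{rem3} through the monotonicity of $\varphi$; the only point requiring a moment's care is the bookkeeping identification of the maximum expression in $(q)$ with $\mathcal{N}_{f}$, which is immediate from the definition of $\mathcal{N}_{f}$ recorded in the notation preceding Remark \ref{rem3}.
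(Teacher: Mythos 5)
Your proposal is correct and follows exactly the paper's route: the paper derives Corollary \ref{cor2} from Theorem \ref{th1} precisely by invoking Remark \ref{rem3} (the bound $\mathcal{N}_{f}(gu,gv)\leq\mathcal{M}_{f}(gu,gv)$) together with the monotonicity of $\varphi$, so that condition $(q)$ implies condition $(m)$. Your write-up merely makes explicit the one-line reduction the paper leaves to the reader.
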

		\vspace{.3cm}
		Now, we establish the following results for the uniqueness of common fixed point (corresponding to Corollary \ref{cor2}):
	
		\begin{thm}\label{th2}
			In additi%%
			on to the hypotheses of Corollary \ref{cor2}, suppose that the following condition holds:\\
			$(r): ~ f(X) ~is ~(g,\mathcal{R}^{s})-directed.$\\
			 Then $(f,g)$ has a unique point of coincidence. Moreover, if $(f,g)$ is weakly compatible, then $(f,g)$ has a unique common fixed point.
		\end{thm}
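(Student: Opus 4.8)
The plan is to read off both uniqueness assertions from Corollary \ref{cor2} together with the directedness hypothesis $(r)$. By Corollary \ref{cor2} the pair $(f,g)$ already has a coincidence point, so the only genuine work lies first in the uniqueness of the \emph{point of coincidence} and then in promoting it to a common fixed point via weak compatibility.

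First I would prove uniqueness of the point of coincidence. Let $x,y\in C(f,g)$, so $fx=gx$ and $fy=gy$; I must show $gx=gy$. Since $fx,fy\in f(X)$ and $f(X)$ is $(g,\mathcal R^s)$-directed, there is $w_0\in X$ with $(fx,gw_0)\in\mathcal R^s$ and $(fy,gw_0)\in\mathcal R^s$; using $fx=gx$, $fy=gy$ and the characterization $(u,v)\in\mathcal R^s\Leftrightarrow[u,v]\in\mathcal R$ this reads $[gx,gw_0]\in\mathcal R$ and $[gy,gw_0]\in\mathcal R$. Using $f(X)\subseteq g(X)$ I construct the Jungck sequence $gw_{n+1}=fw_n$. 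A one-line case distinction shows that $(f,g)$-closedness of $\mathcal R$ gives $[gu,gv]\in\mathcal R\Rightarrow[fu,fv]\in\mathcal R$, so starting from $[gx,gw_0]\in\mathcal R$ one propagates $[gx,gw_n]\in\mathcal R$, and likewise $[gy,gw_n]\in\mathcal R$, for every $n$.

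Then I would apply the contraction $(q)$ to the pairs $(x,w_n)$; since $\mathcal N_f$ and $d$ are symmetric, $(q)$ extends from $(gu,gv)\in\mathcal R$ to $[gu,gv]\in\mathcal R$ exactly as in Proposition \ref{p2}. Writing $a_n:=d(gx,gw_n)$ and using $fx=gx$, $fw_n=gw_{n+1}$ one obtains $a_{n+1}\le\varphi(\mathcal N_f(gx,gw_n))$, where the three candidate terms are $a_n$, $\tfrac12 d(gw_n,gw_{n+1})$ and $\tfrac12(a_n+a_{n+1})$. The key step is monotonicity of $\{a_n\}$: if $a_{n+1}>a_n$ then $a_n<a_{n+1}$, $\tfrac12(a_n+a_{n+1})<a_{n+1}$, and $\tfrac12 d(gw_n,gw_{n+1})\le\tfrac12(a_n+a_{n+1})<a_{n+1}$, so $\mathcal N_f(gx,gw_n)<a_{n+1}$ and hence $a_{n+1}\le\varphi(\mathcal N_f(gx,gw_n))\le\varphi(a_{n+1})<a_{n+1}$ by Lemma \ref{lm2}, a contradiction. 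Once $a_{n+1}\le a_n$ is known, both $\tfrac12(a_n+a_{n+1})$ and $\tfrac12 d(gw_n,gw_{n+1})$ are $\le a_n$, forcing $\mathcal N_f(gx,gw_n)=a_n$ and $a_{n+1}\le\varphi(a_n)$; iterating gives $a_n\le\varphi^n(a_0)\to 0$ by $(\Phi_2)$. Thus $gw_n\stackrel{d}{\to}gx$, and the identical argument applied to $y$ yields $gw_n\stackrel{d}{\to}gy$; uniqueness of limits forces $gx=gy$, so $fx=gx=gy=fy$ and the point of coincidence is unique.

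Finally, for the common fixed point, let $\bar x=fx=gx$ be the unique point of coincidence. Weak compatibility gives $f(gx)=g(fx)$, i.e.\ $f\bar x=g\bar x$, so $\bar x$ is itself a coincidence point; by the uniqueness just established its point of coincidence $f\bar x$ must equal $\bar x$, whence $\bar x=f\bar x=g\bar x$ is a common fixed point. Any two common fixed points are in particular points of coincidence, so uniqueness of the latter yields uniqueness of the former. I expect the main obstacle to be the convergence analysis of the third paragraph---in particular the monotonicity argument that pins $\mathcal N_f(gx,gw_n)$ down to $a_n$---since everything else follows routinely from directedness, $(f,g)$-closedness and weak compatibility.
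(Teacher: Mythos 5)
Your proposal is correct and matches the paper's own proof in essentially every step: directedness of $f(X)$ supplies $w_0$ with $[gx,gw_0],[gy,gw_0]\in\mathcal{R}$, the Jungck iterates $gw_{n+1}=fw_n$ stay related to $gx$ and $gy$ by $(f,g)$-closedness, the $\mathcal{N}_f$-contraction plus the monotonicity contradiction pins the maximum to $a_n$ and yields $a_n\le\varphi^n(a_0)\to 0$, and weak compatibility then upgrades the unique point of coincidence to the unique common fixed point exactly as in the paper's Steps 2 and 3. Your explicit handling of the symmetric extension of $(f,g)$-closedness and of the $\mathcal{N}_f$-version of Proposition \ref{p2} is in fact slightly more careful than the paper's tacit use of these facts.
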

		
		\begin{proof} We prove the result in three steps.\\
			\noindent{\bf{Step 1:}}
			By Corollary \ref{cor2}, $C(f,g)$ is non-empty. If $C(f,g)$ is singleton, then there is nothing to prove. Otherwise, to substantiate the proof, take two arbitrary elements $u,v ~{\rm in}~ C(f,g),$ so that
			$$fu=gu=\overline{x}\;{\rm and}\;fv=gv=\overline{y}.$$
			Now, we are required to show that $\overline{x}=\overline{y}$.
			Since $C(f,g)\subseteq fX$ and $fX~{\rm is}~(g,\mathcal{R}^s)$-directed, there exists $u_0\in X$ such that $[\overline{x},gu_0]\in\mathcal{R}$ and $[\overline{y},gu_0]\in\mathcal{R}$. Now, we construct a sequence $\{gu_n\}$ corresponding to $u_0$, so that $gu_{n+1}=fu_{n} ~{\rm for~all}~ n\in\mathbb{N}_0.$
			
			\vspace{.3cm}
			We claim  that $\displaystyle\lim_{n\to\infty}d(\overline{x},gu_n)=0.$ If $d(\overline{x},gu_{n_0})=0,$ for some $n_0\in\mathbb{N}_0$, then there is nothing to prove. Otherwise, $d(\overline{x},gu_n)>0,$ for all $n\in\mathbb{N}_0.$
			\noindent As $[\overline{x}, gu_{n}]\in \mathcal{R}$, for all $n\in \mathbb{N}_0$ (due to the fact that $(f,g)$-closedness of $\mathcal{R}$ and $[\overline{x}, gu_0]\in \mathcal{R}$), by Proposition \ref{p2} and hypothesis $(q)$, we get
			$$d(\overline{x}, gu_{{n}+1})=d(fu, fu_{n})\leq \varphi( \mathcal{N}_{f}(gu, gu_{{n}})),\eqno(16)$$
			where, \begin{eqnarray*}
				% \nonumber to remove numbering (before each equation)
				\mathcal{N}_{f}(gu, gu_{{n}}) &=& max\big\{ d(gu,gu_{{n}}), \frac{1}{2}[d(gu,fu)+d(gu_{{n}},fu_{{n}})],
				\frac{1}{2}[d(gu,fu_{{n}}) +d(gu_{n},fu)\big]\big\} \\
				&\leq& max\big\{d(gu,gu_n),\frac{1}{2}[d(gu_{{n}},gu)+d(fu_{n},gu)], \frac{1}{2}[d(gu,fu_{{n}}) +d(gu_{n},fu)]\big\}\\
				&\leq& max\big\{d(gu,gu_n),\frac{1}{2}[d(gu_{{n}},gu)+d(fu_{n},gu)]\big\}\\
				&\leq& max\big\{d(gu,gu_n), d(gu,fu_{{n}})\big\}\\
				&=&max\big\{d(gu,gu_{{n}}), d(gu,gu_{{n}+1})\big\},
			\end{eqnarray*}
		    on using this and property $(\Phi_1)$ (16) yields (for all $n\in\mathbb{N}_0$)
			\begin{eqnarray*}
				% \nonumber to remove numbering (before each equation)
				d(gu, gu_{{n}+1}) &\leq& \varphi\big( max\big\{d(gu,gu_{{n}}), d(gu,gu_{{n}+1})\big\}\big)\\
				&= & \varphi\big(d(gu,gu_{{n}})\big),
			\end{eqnarray*}
			otherwise, we get a contradiction.
			So, by induction on $n$, we get
			$$d(gu,gu_{{n}})\leq\varphi^{n}(d(gu,gu_{{0}})), {\rm ~for~ all} ~n\in \mathbb{N}_0,$$
			which on making $n\to \infty $ and using the property $(\Phi_2)$, we get
			$$\displaystyle\lim_{n\to\infty}d(gu,gu_n)=0.\eqno(17)$$
			Similarly, we can obtain
			$$\displaystyle\lim_{n\to\infty}d(gv,gu_n)=0.\eqno(18)$$
			Using (17) and (18), we have
			\begin{eqnarray*}
				% \nonumber to remove numbering (before each equation)
				d(\overline{x},\overline{y}) &\leq& d(gu,gu_n)+d(gu_n, gv)  \\
				&\to& 0,~ \text{as} ~n\to \infty\\
				\Rightarrow \overline{x} &=& \overline{y} ~~i.e.,~ (f,g) ~\; {\rm has~ a~ unique~point~of~coincidence}\;.
			\end{eqnarray*}
			
			\noindent{\bf{Step 2:}} Now, we claim that the pair  $(f,g)$ has a common fixed point, let $x\in C(f,g), ~i.e., ~fx=gx$. Due to weakly compatibility of the pair $(f,g)$, we have
			$$f(gx)=g(fx)=g(gx).\eqno(19)$$
			Put $gx=y$. Then from (19), $fy=gy.$ Hence $y$ is also a coincidence point of $f$ and $g$.
			In view of Step 1, we have $$y=gx=gy=fy,$$ so that $y$ is a common fixed point $(f,g)$.\\
			
			\noindent{\bf{Step 3:}} To prove the uniqueness of common fixed point of $(f,g)$, let us assume that $w$ is another common fixed point of $(f,g)$.
			Then $w\in C(f,g),$ by Step 1, $$w=gw=gy=y.$$
			Hence $(f,g)$ has a unique common fixed point.
		\end{proof}
		
		\vspace{.3cm}
		\begin{rem}\label{rem4}
			In view of Theorem \ref{th2}, we have used comparatively more natural condition `` $(g,\mathcal{R}^s)$-directedness of $f(X)$" instead of `` $(g,\mathcal{R}^{s})$-directedness of $C(f,g)$" which is too restrictive. Our proof carry on even if we take ``$C(f,g)~is~ (g,\mathcal{R}^{s})$-directed". Since point of coincidence implies that coincidence point due to weakly compatible of $(f,g)$, as in our Theorem \ref{th2} we want to find unique common fixed point of $f$ and $g$ which is the point in $C(f,g).$
		\end{rem}
		\begin{thm}\label{th3}
			In addition to the hypotheses of Theorem \ref{th1}, assume the condition $(r)$ together with the following condition holds:
			$$(s):  one ~of ~f ~and ~g ~is ~one ~to~one~ .$$ Then $(f,g)$ has a unique coincidence point.
		\end{thm}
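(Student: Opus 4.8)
The existence half is already in hand: Theorem~\ref{th1} guarantees $C(f,g)\neq\emptyset$, so the whole content of the statement is the uniqueness of the coincidence point, and the plan is to obtain it in two stages—first prove that the \emph{point} of coincidence is unique, exploiting the directedness $(r)$ and the contraction $(m)$, and then promote this to uniqueness of the \emph{coincidence point} by means of the injectivity $(s)$. So I would suppose $u,v\in C(f,g)$ with $fu=gu=:\overline{x}$ and $fv=gv=:\overline{y}$; since $\overline{x},\overline{y}\in f(X)$ and $f(X)$ is $(g,\mathcal{R}^{s})$-directed, there is $w\in X$ with $[\overline{x},gw]\in\mathcal{R}$ and $[\overline{y},gw]\in\mathcal{R}$.

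Starting from $u_{0}:=w$ I would form the Jungck iteration $gu_{n+1}=fu_{n}$. Using that $\mathcal{R}$ is $(f,g)$-closed (hence so is $\mathcal{R}^{s}$, by Proposition~\ref{p2}) together with $\overline{x}=fu=gu$, an easy induction shows $[\overline{x},gu_{n}]\in\mathcal{R}$ and $[\overline{y},gu_{n}]\in\mathcal{R}$ for every $n\in\mathbb{N}_{0}$. Writing $a_{n}:=d(\overline{x},gu_{n})$ and applying $(m)$ via Proposition~\ref{p2} to the comparable pair $(gu,gu_{n})$ gives $a_{n+1}=d(fu,fu_{n})\leq\varphi(\mathcal{M}_{f}(gu,gu_{n}))$, where $d(gu,fu)=0$ kills one entry of $\mathcal{M}_{f}$. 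The goal of this stage is to estimate $\mathcal{M}_{f}(gu,gu_{n})$ by $\max\{a_{n},a_{n+1}\}$, whence Lemma~\ref{lm2} and $(\Phi_{1})$ force $a_{n+1}<a_{n}$, then $a_{n}\leq\varphi^{n}(a_{0})$, and finally $a_{n}\to0$ by $(\Phi_{2})$; the identical computation centred at $\overline{y}$ gives $d(\overline{y},gu_{n})\to0$. The triangle inequality $d(\overline{x},\overline{y})\leq d(\overline{x},gu_{n})+d(gu_{n},\overline{y})\to0$ then yields $\overline{x}=\overline{y}$, i.e. a unique point of coincidence.

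It is exactly here that the injectivity $(s)$ does its work at the level of the whole statement: $\overline{x}=\overline{y}$ says $gu=gv$ and simultaneously $fu=fv$, so whichever of $f$ and $g$ is one-to-one returns $u=v$; hence $C(f,g)$ is a singleton and the coincidence point is unique.

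The step I expect to be the genuine obstacle is the estimate of $\mathcal{M}_{f}(gu,gu_{n})$. In contrast with the $\mathcal{N}_{f}$-computation of Theorem~\ref{th2}, the self-distance term $d(gu_{n},fu_{n})=d(gu_{n},gu_{n+1})$ now enters $\mathcal{M}_{f}$ \emph{without} the damping factor $\tfrac12$, and the plain triangle bound $d(gu_{n},gu_{n+1})\leq a_{n}+a_{n+1}$ is too lossy to place $\mathcal{M}_{f}(gu,gu_{n})$ below $\max\{a_{n},a_{n+1}\}$—a naive reading even admits the undamped oscillatory possibility $a_{n}\equiv\mathrm{const}>0$, so the bridge sequence $\{gu_{n}\}$ is not automatically Cauchy the way it is for $\mathcal{N}_{f}$. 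Controlling this term is precisely where the weaker $\mathcal{M}_{f}$-contraction must be combined with the extra hypothesis $(s)$ (or with a careful, case-by-case monotonicity analysis of the iteration that rules out the oscillatory regime), and this is the crux on which the whole proof turns.
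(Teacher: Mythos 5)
Your outline reproduces the paper's intended route for Theorem \ref{th3} exactly: existence from Theorem \ref{th1}, uniqueness of the \emph{point} of coincidence via the Theorem \ref{th2} bridge sequence started at a witness supplied by $(r)$, and then $(s)$ to convert $gu=gv$, $fu=fv$ into $u=v$ (that last step of yours is correct, and it is the \emph{only} work $(s)$ does). But as submitted the proposal is not a proof: the step you yourself flag as ``the genuine obstacle'' is never carried out, and your diagnosis that it is an obstacle is accurate. Along the bridge the only comparabilities available are $[\overline{x},gu_n]\in\mathcal{R}$ and $[\overline{y},gu_n]\in\mathcal{R}$ --- consecutive terms $gu_n,gu_{n+1}$ need not be $\mathcal{R}$-comparable, so the contraction cannot be applied to them --- and hence, writing $a_n:=d(\overline{x},gu_n)$, the undamped entry $d(gu_n,fu_n)=d(gu_n,gu_{n+1})$ of $\mathcal{M}_f(gu,gu_n)$ admits no bound better than $a_n+a_{n+1}$. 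In the worst case the recursion collapses to $a_{n+1}\leq\varphi(a_n+a_{n+1})$, which forces nothing: for $\varphi(t)=\tfrac{3}{4}t\in\Phi$ it reads $a_{n+1}\leq\tfrac{3}{4}(a_n+a_{n+1})$, i.e. $a_{n+1}\leq 3a_n$, consistent even with geometric growth, so neither $a_n\to 0$ nor Cauchyness of $\{gu_n\}$ follows. Your hope that $(s)$ could be combined with $(m)$ to control this term is a dead end --- injectivity of $f$ or $g$ carries no metric information about $d(gu_n,fu_n)$ --- and no case analysis of the inequalities you have on the table can close it, since they admit non-vanishing solutions. (Minor point: the $(f,g)$-closedness of $\mathcal{R}^s$ is elementary but is not Proposition \ref{p2}; that proposition is the symmetrization of the contraction condition, which you do use correctly elsewhere.)

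For what it is worth, the paper does not close this step either: its entire proof of Theorem \ref{th3} is the phrase ``in similar lines of the proof of Theorem \ref{th2}'', but Step 1 of Theorem \ref{th2} is computed for the $\mathcal{N}_f$-contraction $(q)$, where the self-distances enter only as $\tfrac{1}{2}[d(gu,fu)+d(gu_n,fu_n)]\leq\max\{a_n,a_{n+1}\}$ --- precisely the $\tfrac{1}{2}$-damping you single out. Since $\mathcal{N}_f\leq\mathcal{M}_f$ (Remark \ref{rem3}) and $\varphi$ is increasing, hypothesis $(m)$ of Theorem \ref{th1} is strictly weaker than $(q)$, so the citation does not transfer and the published argument has the same hole you identified; your proposal is in fact more candid than the paper on this point. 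A complete argument is available only after strengthening the hypotheses along one of the paper's own two lines: either assume $(q)$ in place of $(m)$, in which case your bridge computation goes through verbatim as in Theorem \ref{th2}; or assume $(t)$: $\mathcal{R}|_{fX}$ is complete, as in Theorem \ref{th4}, which makes $[\overline{x},\overline{y}]\in\mathcal{R}$ directly, whence $\mathcal{M}_f(gu,gv)=d(\overline{x},\overline{y})$ (the self-distances vanish at coincidence points) and $d(\overline{x},\overline{y})\leq\varphi(d(\overline{x},\overline{y}))$ is an immediate contradiction, after which $(s)$ finishes exactly as you say. Under $(m)$ and $(r)$ alone, the gap is not mendable by sharper bookkeeping of the same estimates.
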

		\begin{proof}
			Due to Theorem \ref{th1}, $C(f,g)\ne \emptyset.$ Let $u,v\in C(f,g)$, and hence in similar lines of the proof of Theorem \ref{th2}, we have
			$$gu=fu=fv=gv.$$
			Since either $f$ or $g$ is one-one, we have $$u=v.$$
		\end{proof}
		
		Notice that Theorem \ref{th3} is a natural improved version of Theorem 4 due to Alam and Imdad \cite{Alamimdad2}. 	
		
		\vspace{.3cm}
		\begin{thm}\label{th4} In addition to the hypotheses of Theorem \ref{th1}, assume the following condition holds:
			\begin{enumerate}
				\item [$(t)$] $\mathcal{R}|_{fX}~is~complete.$
			\end{enumerate}
			Then $(f,g)$ has a unique point of coincidence. Moreover, if $(f,g)$ is weakly compatible, then $(f,g)$ has a unique common fixed point.
		\end{thm}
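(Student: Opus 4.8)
The plan is to follow the three-step scheme of Theorem \ref{th2}, but to replace the directedness hypothesis $(r)$ by the completeness of $\mathcal{R}|_{fX}$ supplied in $(t)$, and to work with the larger majorant $\mathcal{M}_{f}$ (rather than $\mathcal{N}_{f}$) inherited from the hypotheses of Theorem \ref{th1}. First, Theorem \ref{th1} guarantees that $C(f,g)\neq\emptyset$, so at least one point of coincidence exists; it remains to prove uniqueness and then to upgrade it to a common fixed point under weak compatibility.

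To establish the uniqueness of the point of coincidence, I would take any two points of coincidence $\overline{x}$ and $\overline{y}$, say $\overline{x}=fu=gu$ and $\overline{y}=fv=gv$ for some $u,v\in X$. Since $\overline{x},\overline{y}\in f(X)$ and $\mathcal{R}|_{fX}$ is complete by $(t)$, the pair is comparable, that is, $[\overline{x},\overline{y}]=[gu,gv]\in\mathcal{R}$. Now Proposition \ref{p2} upgrades the contraction $(m)$ from $(gu,gv)\in\mathcal{R}$ to $[gu,gv]\in\mathcal{R}$, so that $d(fu,fv)\leq\varphi(\mathcal{M}_{f}(gu,gv))$. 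The key computation is that, because $d(gu,fu)=d(gv,fv)=0$ and $d(gu,fv)=d(gv,fu)=d(\overline{x},\overline{y})$, the majorant collapses to $\mathcal{M}_{f}(gu,gv)=d(\overline{x},\overline{y})$. Hence $d(\overline{x},\overline{y})\leq\varphi(d(\overline{x},\overline{y}))$; if $d(\overline{x},\overline{y})>0$ this contradicts Lemma \ref{lm2}, forcing $\overline{x}=\overline{y}$.

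For the common fixed point I would argue exactly as in Steps 2 and 3 of Theorem \ref{th2}. Given $x\in C(f,g)$ with $fx=gx$, weak compatibility yields $f(gx)=g(fx)=g(gx)$, so setting $y=gx$ gives $fy=gy$, i.e. $y$ is again a coincidence point. Its point of coincidence $fy=gy$ must coincide with $fx=gx=y$ by the uniqueness just proved, whence $y=fy=gy$ is a common fixed point. Any other common fixed point $w$ satisfies $w=fw=gw$, so $w$ is a point of coincidence and the uniqueness from the previous step gives $w=y$.

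I expect the only point requiring care to be the evaluation of $\mathcal{M}_{f}$ at a pair of coincidence points; once the two diagonal distances vanish and the cross terms both reduce to $d(\overline{x},\overline{y})$, the rest is driven mechanically by Lemma \ref{lm2}. It is worth emphasizing that, in contrast with Theorem \ref{th2}, no auxiliary sequence or iteration is needed here: the completeness of $\mathcal{R}|_{fX}$ makes the two points of coincidence \emph{directly} comparable, which is precisely what shortens the argument relative to the directedness route.
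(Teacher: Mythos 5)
Your proposal is correct and follows essentially the same route as the paper: the completeness of $\mathcal{R}|_{fX}$ makes the two points of coincidence comparable, Proposition \ref{p2} lets condition $(m)$ apply, $\mathcal{M}_{f}(gu,gv)$ collapses to $d(\overline{x},\overline{y})$, and Lemma \ref{lm2} yields the contradiction, after which the common fixed point part is delegated to Steps 2--3 of Theorem \ref{th2} exactly as the paper does. Your explicit evaluation of the majorant is in fact slightly more careful than the paper's one-line computation, but the argument is identical in substance.
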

		\begin{proof}
			From Theorem \ref{th1}, we have $C(f,g)\ne\emptyset$. If $C(f,g)$ is singleton, then proof is over. Otherwise, choose any two elements $x\ne y ~{\rm in}~ C(f,g),$ so that
			$$fx=gx=\overline{x}\;{\rm and}\;fy=gy=\overline{y}.$$
			As $\mathcal{R}|_{fX}$ is complete, $[\overline{x}, \overline{y}]\in \mathcal{R}$. Using Proposition \ref{p2} and condition $(m)$, we get
			\begin{eqnarray*}
			d(\overline{x},\overline{y})=d(fx,fy) &\leq& \varphi\big(max\big\{d(gx,gy),
			d(gx,fx),d(gy,fy), \frac{1}{2}[d(gx,fy)+d(gy,fx)]\big\}\big)\\
				&=& \varphi\big(d(gx,gy)\big)\\
				&<& d(gx,gy)= d(\overline{x},\overline{y}),
			\end{eqnarray*}
			which is a contradiction, hence $d(\overline{x},\overline{y})=0$, therefore $\overline{x}=\overline{y}.$
			Thus $(f,g)$ has a unique point of coincidence. Thus the remaining part of the proof can be obtained from Theorem \ref{th2}.
		\end{proof}
		\vspace{.3cm}
		\begin{rem}
			Indeed, Theorem \ref{th4} is more general as compared to Corollary 5.1 of Berzig \cite{Berzig2012} and Corollary 3 due to Alam and Imdad \cite{Alamimdad2}.
		\end{rem}

		\vspace{.3cm}
		In regard of Remark \ref{rem4}, on considering symmetric closure $\mathcal{S}$ of any binary relation $\mathcal{R}$  in Theorem \ref{th2}, we obtain the following sharpened version of Theorem \ref{th1.2}.
		
		\begin{cor}\label{cor3}
			Let $(f,g)$ be a pair of self-mappings defined on a metric space $(X,d)$ endowed with symmetric closure $\mathcal{S}$ of any arbitrary binary relation defined on $X$ and $Y$ be a subspace of $X$.
			Assume that the conditions $(b),(c),(e)$ and $(g)$ together with the following conditions hold:
			\begin{enumerate}
				\item [$(u)$] $(Y,d)$ is $\mathcal{S}$-complete subspace of $X$;
				\item [$(v)$] \begin{enumerate}
					\item [$(v_{1})$] $Y\subseteq g(X);$
					\item [$(v_{2})$] either $f$ is $(g,\mathcal{S})$-continuous or $f$ and $g$ are continuous or $\mathcal{S}|_Y$ is $d$-self-closed;
				\end{enumerate}
			\end{enumerate}
			\hspace{.5cm}or, alternatively
			\begin{enumerate}
				\item [$(w)$]  \begin{enumerate}
					\item [$(w_1)$] $(f,g)$ is $\mathcal{S}$-compatible;
					\item [$(w_2)$] $g$ is $\mathcal{S}$-continuous;
					\item [$(w_3)$] either $f$ is $\mathcal{S}$-continuous or $\mathcal{S}$ is $(g,d)$-self-closed.
				\end{enumerate}
			\end{enumerate}
			Then $(f,g)$ has a coincidence point. Moreover, if $C(f,g)$ is $(g,\mathcal{R}^{s})$-directed and $(f,g)$ is weakly compatible,
			then $(f,g)$ has a unique common fixed point.
		\end{cor}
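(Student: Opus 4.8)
The plan is to deduce Corollary~\ref{cor3} directly from Theorem~\ref{th2} by specializing the ambient binary relation to be the symmetric closure $\mathcal{S}=\mathcal{R}\cup\mathcal{R}^{-1}$. The crucial structural observation is that $\mathcal{S}$ is symmetric, so that $\mathcal{S}^{-1}=\mathcal{S}$ and hence $\mathcal{S}^s=\mathcal{S}$; moreover $\mathcal{R}^s=\mathcal{R}\cup\mathcal{R}^{-1}=\mathcal{S}$. Consequently, for a symmetric relation the distinction between $(gu,gv)\in\mathcal{S}$ and $[gu,gv]\in\mathcal{S}$ collapses (by the proposition relating $\mathcal{R}^s$ and $[\cdot,\cdot]$, together with Proposition~\ref{p2}), which is exactly the feature that lets us run the machinery of Theorem~\ref{th1} with $\mathcal{S}$ in place of $\mathcal{R}$ while only assuming self-closedness rather than the stronger regularity of the whole space used in Theorem~\ref{th1.1}.

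First I would read off the correspondence of hypotheses under the substitution of $\mathcal{S}$ for $\mathcal{R}$: condition $(u)$ is precisely the $\mathcal{S}$-completeness of $Y$, i.e.\ hypothesis $(f)$; condition $(g)$ is unchanged; condition $(b)$ supplies the seed point, i.e.\ hypothesis $(h)$; condition $(c)$ is the $(f,g)$-closedness of $\mathcal{S}$, i.e.\ hypothesis $(i)$; and condition $(e)$, being the $\mathcal{N}_f$-contraction over pairs in $\mathcal{S}$, is exactly hypothesis $(q)$ of Corollary~\ref{cor2}. The two regularity alternatives match as well: the block $(v)$, with $(v_1)$ giving $Y\subseteq g(X)$ and $(v_2)$ the $(g,\mathcal{S})$-continuity / joint continuity / $d$-self-closedness trichotomy, is hypothesis $(k)$, while the block $(w)$ is hypothesis $(l)$. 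Thus every hypothesis required to invoke Corollary~\ref{cor2}, and therefore Theorem~\ref{th1}, with the relation $\mathcal{S}$ is in force, and so $(f,g)$ admits a coincidence point.

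Next I would address the uniqueness and common-fixed-point conclusions. Theorem~\ref{th2} adds condition $(r)$, namely that $f(X)$ is $(g,\mathcal{R}^s)$-directed; under the substitution this reads $f(X)$ is $(g,\mathcal{S})$-directed, since $\mathcal{S}^s=\mathcal{S}$. The hypothesis actually imposed in Corollary~\ref{cor3} is that $C(f,g)$ is $(g,\mathcal{R}^s)$-directed, and because $\mathcal{R}^s=\mathcal{S}$ this is the directedness discussed in Remark~\ref{rem4}; that remark certifies that the argument of Theorem~\ref{th2} is unaffected when directedness is imposed only on $C(f,g)$ rather than on $f(X)$. Feeding this into Theorem~\ref{th2} yields a unique point of coincidence, and then weak compatibility promotes it to a unique common fixed point exactly as in Steps~2 and~3 of that proof.

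The only point needing genuine care, as opposed to a mechanical translation, is making sure the directedness hypothesis is attached to the correct set and the correct relation. I expect this to be the main, albeit mild, obstacle: one must verify that $\mathcal{R}^s$, $\mathcal{S}$ and $\mathcal{S}^s$ all coincide, and that Remark~\ref{rem4} legitimately replaces $f(X)$-directedness by $C(f,g)$-directedness. Everything else is a verbatim reuse of the proofs of Theorem~\ref{th1} and Theorem~\ref{th2} with $\mathcal{S}$ substituted throughout for $\mathcal{R}$.
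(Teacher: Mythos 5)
Your proposal is correct and takes essentially the same route as the paper: the paper obtains Corollary~\ref{cor3} precisely by specializing Theorem~\ref{th2} to the symmetric closure $\mathcal{S}$ (so that $\mathcal{R}^{s}=\mathcal{S}=\mathcal{S}^{s}$) and invoking Remark~\ref{rem4} to justify imposing the $(g,\mathcal{R}^{s})$-directedness on $C(f,g)$ rather than on $f(X)$. Your hypothesis-by-hypothesis matching of $(b),(c),(e),(g),(u),(v),(w)$ with $(h),(i),(q),(g),(f),(k),(l)$ is exactly the intended reading, so nothing further is needed.
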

		
		Notice that the hypotheses `$\mathcal{S}$ is $(f,g)$-closed' is equivalent to $f$ is a  '$g$-comparative' and `$\mathcal{S}|_Y$ is $d$-self-closed'  is more natural `the regular property of $(Y,d,\mathcal{S})$'. Further `$\mathcal{S}$ is $(g,d)$-self-closed' is more natural the `$\mathcal{S}$ is $d$-self-closed'.
		
		\iffalse \vspace{.3cm}
		Let $\Lambda$ be the collection of all increasing continuous mappings $\psi: [0,\infty)\to [0,\infty)$ such that
		\begin{description}
			\item[{$(\Lambda_1)$}] For all $s>0,~~ 0<\psi(s)<s;$
			\item[$(\Lambda_2)$] $g(s)=\frac{s}{s-\psi(s)}$ is strictly decreasing function on $(0,\infty)$;
			\item[$(\Lambda_3)$] $\int_{0}^{\alpha}g(s)ds< \infty $, for all $\alpha>0$.
		\end{description}
		\begin{lem}\cite{Altman}\label{lem3}
			We have $\Lambda\subset \Phi$.
		\end{lem}
		
		In view of Lemma \ref{lem3}, we obtain a result of Theorem \ref{th2} which as:
		
		\begin{cor}\label{cor4}
			The conclusion of Theorem \ref{th2} remains holds if we replace the condition $(q)$ by the following (together with rest of the hypotheses):
			$$(q_{1}) ~there~ exists~ \psi\in\Lambda ~such ~that~ d(fx,fy)\leq \psi\big(\mathcal{N}_f(gx,gy)\big);$$
			\indent\hspace{0.5mm}for all $x,y\in X\;\textrm{with}\; (gx,gy)\in \mathcal{R}.$
		\end{cor}
		Notice that, Corollary \ref{cor4} is a generalized version of Samet \cite{SametT2012}.
		\fi
		
		\section{Consequences}
		As consequences of our former proved results, we deduce several well known results of the existing literature.

	\vspace{.3cm}
		On the setting $\varphi(t)=kt$, with $k\in [0,1)$, we obtain the following corollaries which are immediate consequences of Theorem \ref{th2}.
		
		\begin{cor}\label{cor5}
			Let $(f,g)$ be a pair of self-mappings defined on a metric space $(X,d)$ equipped with a binary relation  $\mathcal{R}$ and  $Y$ a subspace of $X.$
			Suppose that the conditions $(f),(g),(h),(i)$ and $(k)$ $(or~(l))$ together with  the following condition holds:
			
			\begin{enumerate}
				\item [$(q_{1})$] there exists $k\in [0,1]$ such that $(~for~ all~u,v\in X\;\textrm{with}\; (gu,gv)\in \mathcal{R})$
				
				$$d(fu,fv)\leq k(max\big\{d(gu,gv), \frac{1}{2}[d(gu,fu)+d(gv,fv)],\frac{1}{2}[d(gu,fv)+d(gv,fu)]\big\}\big).$$
			\end{enumerate}
			Then $(f,g)$ has a coincidence point. Moreover, if $f(X)$ is $(g,\mathcal{R}^{s})$-directed and  $(f,g)$ is weakly compatible,
			then $(f,g)$ has a unique common fixed point.
		\end{cor}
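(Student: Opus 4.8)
The plan is to obtain this corollary as a pure specialization of Theorem \ref{th2}, taking the control function to be the linear map $\varphi(t)=kt$ with $k\in[0,1)$. First I would verify that this $\varphi$ belongs to the class $\Phi$. Property $(\Phi_1)$ holds because $\varphi(t)=kt$ is (non-strictly) increasing, as $k\geq 0$. Property $(\Phi_2)$ holds because the $n$-th iterate is $\varphi^{n}(t)=k^{n}t$, whence $\sum_{n=1}^{\infty}\varphi^{n}(t)=t\sum_{n=1}^{\infty}k^{n}=\frac{kt}{1-k}<\infty$ for every $t>0$, precisely because $k<1$. This is the sole place where the strict bound $k<1$ is needed; the upper endpoint in the statement of $(q_{1})$ should accordingly be read as $k\in[0,1)$, since $k=1$ gives $\varphi(t)=t$, which violates $(\Phi_2)$.

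Next I would observe that, with this choice of $\varphi$, the contraction hypothesis $(q)$ of Corollary \ref{cor2} — namely $d(fu,fv)\leq\varphi(\mathcal{N}_{f}(gu,gv))$ for all $u,v\in X$ with $(gu,gv)\in\mathcal{R}$ — collapses to exactly condition $(q_{1})$ after substituting $\varphi(t)=kt$ and expanding $\mathcal{N}_{f}(gu,gv)$. All remaining hypotheses of Theorem \ref{th2} are assumed verbatim in the corollary: conditions $(f),(g),(h),(i)$ and the dichotomy $(k)$ (or $(l)$) supply the coincidence-point machinery of Corollary \ref{cor2}, while the additional requirement $(r)$ that $f(X)$ be $(g,\mathcal{R}^{s})$-directed, together with weak compatibility of $(f,g)$, furnish the uniqueness and common-fixed-point conclusions of Theorem \ref{th2}.

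Having matched every hypothesis, I would simply invoke Theorem \ref{th2} to conclude at once that $(f,g)$ has a coincidence point and a unique point of coincidence, and that under weak compatibility $(f,g)$ admits a unique common fixed point. Since the argument is a direct specialization, I do not expect any genuine obstacle: the entire content is the routine verification that $\varphi(t)=kt\in\Phi$ (the one real check being the convergence of the geometric series, which forces $k<1$) and the observation that $(q)$ reduces to $(q_{1})$; no new metric estimate is required beyond what Theorem \ref{th2} already delivers.
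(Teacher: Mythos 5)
Your proposal is correct and takes essentially the same route as the paper, which presents this corollary as an immediate consequence of Theorem \ref{th2} obtained by setting $\varphi(t)=kt$ with $k\in[0,1)$ --- exactly your specialization, including the geometric-series check that $\varphi\in\Phi$. Your remark that the stated range $k\in[0,1]$ in $(q_{1})$ must be read as $k\in[0,1)$ (since $k=1$ gives $\varphi(t)=t$, violating $(\Phi_2)$) is also right: the paper's own preamble to the corollary uses $k\in[0,1)$, so the closed interval in the statement is a typo.
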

		\begin{rem}
			Corollary \ref{cor5} is a sharpened version of Corollary 5.10 of Berzig \cite{Berzig2012} and Corollary 3.3 (corresponding to condition (20)) due to Ahmadullah et al. \cite{AhmadIA}.
		\end{rem}
		\begin{cor}\label{cor6}
			Let $(f,g)$ be a pair of self-mappings defined on a metric space $(X,d)$ equipped with a binary relation  $\mathcal{R}.$ Suppose that the conditions $(f),(g),(h),(i)$ and $(k)$ $(or~(l))$ together with the following condition holds:
			\begin{enumerate}
				\item [$(q_{2})$] there exist $a,~b,~ c\geq 0 ~with~a+2b+2c<1$ such that (for~ all~ $u,v\in X\;\textrm{with}\; (gu,gv)\in \mathcal{R}$)
				$$d(fu,fv)\leq ad(gu,gv)+ b[d(gu,fu)+d(gv,fv)]+c[d(gu,fv)+d(gv,fu)].$$
			\end{enumerate}
			Then  $(f,g)$ has a coincidence point. Moreover, if $f(X)$ is $(g,\mathcal{R}^{s})$-directed and $(f,g)$ is weakly compatible,
			then  $(f,g)$ has a unique common fixed point.
		\end{cor}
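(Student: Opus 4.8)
The plan is to show that the Reich--Hardy-Rogers type contraction $(q_2)$ collapses into the single-constant contraction $(q_1)$ of Corollary \ref{cor5}, after which the conclusion follows verbatim from that corollary. Since the structural hypotheses $(f),(g),(h),(i)$ and $(k)$ $(or~(l))$, together with the $(g,\mathcal{R}^{s})$-directedness of $f(X)$ and weak compatibility, are common to both statements and already in force, the entire burden reduces to a coefficient bookkeeping against $\mathcal{N}_{f}(gu,gv)$.

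First I would set $k:=a+2b+2c$ and record that $k\in[0,1)$ by the standing assumption $a+2b+2c<1$. Next, for arbitrary $u,v\in X$ with $(gu,gv)\in\mathcal{R}$, I would bound each of the three summands on the right-hand side of $(q_2)$ against $\mathcal{N}_{f}(gu,gv)=\max\{d(gu,gv),\tfrac{1}{2}[d(gu,fu)+d(gv,fv)],\tfrac{1}{2}[d(gu,fv)+d(gv,fu)]\}$. Directly from this definition one has
$$d(gu,gv)\leq\mathcal{N}_{f}(gu,gv),\quad d(gu,fu)+d(gv,fv)\leq 2\,\mathcal{N}_{f}(gu,gv),\quad d(gu,fv)+d(gv,fu)\leq 2\,\mathcal{N}_{f}(gu,gv).$$
Substituting these three estimates into $(q_2)$ and factoring then yields
$$d(fu,fv)\leq a\,d(gu,gv)+b[d(gu,fu)+d(gv,fv)]+c[d(gu,fv)+d(gv,fu)]\leq(a+2b+2c)\,\mathcal{N}_{f}(gu,gv)=k\,\mathcal{N}_{f}(gu,gv),$$
which is precisely condition $(q_1)$ of Corollary \ref{cor5} with contraction constant $k$.

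With $(q_1)$ established, I would simply invoke Corollary \ref{cor5}: because $k\in[0,1)$, all of its hypotheses are met, so $(f,g)$ possesses a coincidence point, and under the additional assumptions that $f(X)$ is $(g,\mathcal{R}^{s})$-directed and $(f,g)$ is weakly compatible, it admits a unique common fixed point. There is essentially no serious obstacle here; the only point demanding a modicum of care is verifying that the coefficients $a,b,c$ genuinely aggregate into a single constant strictly below one when tested against $\mathcal{N}_{f}$ rather than against the individual distances, which is exactly why the hypothesis is phrased as $a+2b+2c<1$ (the factors $2$ reflecting the $\tfrac{1}{2}$ normalizations inside $\mathcal{N}_{f}$) rather than $a+b+c<1$.
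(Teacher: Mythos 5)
Your proposal is correct and follows essentially the same route as the paper, which obtains Corollary \ref{cor6} as an immediate consequence of Theorem \ref{th2} by setting $\varphi(t)=kt$ with $k=a+2b+2c\in[0,1)$ --- precisely the domination $d(fu,fv)\leq k\,\mathcal{N}_{f}(gu,gv)$ that you establish before invoking Corollary \ref{cor5} (whose stated range $k\in[0,1]$ is evidently a typo for $[0,1)$, as your reading correctly assumes). Your coefficient bookkeeping is the exact computation the paper leaves implicit, so nothing further is needed.
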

		
		\begin{rem}
			Corollary \ref{cor6} remains a sharpened version of Corollary 5.11 due to Berzig \cite{Berzig2012} and Corollary 3.3, (corresponding to condition (22)) in view of Ahmadullah et al. \cite{AhmadIA}.
		\end{rem}
		\begin{rem}
			If $b=0$ and $c=0$ in Corollary \ref{cor6}, then we deduces Theorem \ref{th1.2} (see Alam and Imdad \cite{Alamimdad2}).
		\end{rem}
		
		\iffalse
		The following result was proved by Alam and Imdad \cite{Alamimdad2}:
		\begin{cor}\label{cor7}
			Suppose that all the hypotheses $i.e.,$ $(f),(g),(h),(i),(j)$ and $(k)$ (or $(l))$ of Theorem \ref{th1.2},
			then $f$ and $g$ have a coincidence point. Moreover, if in addition, $fX$ is $(g,\mathcal{R}^{s})$-directed and $(f,g)$ of weakly compatible maps,
			then $f$ and $g$ have a unique common fixed point.
		\end{cor}
		\fi
		\begin{cor}\label{cor8}
			Let $(f,g)$ be a pair of self-mappings defined on a metric space $(X,d)$ equipped a binary relation  $\mathcal{R}$ and $Y$ a subspace of $X$. Assume that the conditions  $(f),(g),(h),(i)$ and $(k)$  (or $(l)$) together with the following condition holds:
			\begin{enumerate}
				\item [$(q_{3})$]there exists $k\in [0,1/2)$ such that (for all $u,v\in X\;\textrm{with}\; (gu,gv)\in \mathcal{R}$)\\
				$$d(fu,fv)\leq k[d(gu,fu)+d(gv,fv)].$$
			\end{enumerate}
			Then $(f,g)$ has a coincidence point. Moreover, if $f(X)$ is $(g,\mathcal{R}^{s})$-directed and  $(f,g)$ is weakly compatible,
			then $(f,g)$ has a unique common fixed point.
		\end{cor}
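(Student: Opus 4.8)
The plan is to recognize Corollary~\ref{cor8} as the linear specialization of Corollary~\ref{cor2}, obtained by producing an explicit $\varphi\in\Phi$ from the constant $k$ in $(q_3)$. First I would observe that the middle term $\tfrac{1}{2}[d(gu,fu)+d(gv,fv)]$ appearing in the definition of $\mathcal{N}_{f}(gu,gv)$ is one of the arguments of the maximum, so that
$$d(gu,fu)+d(gv,fv)\leq 2\,\mathcal{N}_{f}(gu,gv),\qquad\textrm{for all }u,v\in X.$$
Combining this with hypothesis $(q_3)$ yields, for all $u,v\in X$ with $(gu,gv)\in\mathcal{R}$,
$$d(fu,fv)\leq k\big[d(gu,fu)+d(gv,fv)\big]\leq 2k\,\mathcal{N}_{f}(gu,gv).$$

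The next step is to package the factor $2k$ into an admissible control function. Since $k\in[0,1/2)$ we have $2k\in[0,1)$, so I would set $\varphi(t):=2kt$ and verify that $\varphi\in\Phi$: property $(\Phi_1)$ holds because $2k\geq0$ makes $\varphi$ increasing, and property $(\Phi_2)$ holds because $\sum_{n=1}^{\infty}\varphi^{n}(t)=\sum_{n=1}^{\infty}(2k)^{n}\,t<\infty$ for each $t>0$, the series being geometric with ratio $2k<1$. With this choice, the displayed inequality becomes exactly
$$d(fu,fv)\leq\varphi\big(\mathcal{N}_{f}(gu,gv)\big),\qquad\textrm{for all }u,v\in X\textrm{ with }(gu,gv)\in\mathcal{R},$$
which is precisely condition $(q)$ of Corollary~\ref{cor2}.

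Having reduced $(q_3)$ to $(q)$, the remaining hypotheses $(f),(g),(h),(i)$ and $(k)$ $(or~(l))$ are shared verbatim between the two statements, so Corollary~\ref{cor2} applies directly and furnishes a coincidence point of $(f,g)$. For the ``moreover'' assertion, I would note that the added hypotheses of Corollary~\ref{cor8}, namely $(g,\mathcal{R}^{s})$-directedness of $f(X)$ (this is condition $(r)$) and weak compatibility of $(f,g)$, are exactly the extra conditions of Theorem~\ref{th2} layered on top of Corollary~\ref{cor2}; invoking Theorem~\ref{th2} therefore delivers the unique point of coincidence and the unique common fixed point. There is no genuine obstacle in this argument: the entire content is the elementary bound $d(gu,fu)+d(gv,fv)\leq 2\mathcal{N}_{f}(gu,gv)$ together with the verification that $t\mapsto 2kt$ lies in $\Phi$, after which the result is inherited from the already-established Corollary~\ref{cor2} and Theorem~\ref{th2}.
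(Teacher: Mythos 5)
Your proposal is correct and takes essentially the same route as the paper, which obtains Corollary \ref{cor8} as an immediate consequence of Theorem \ref{th2} (layered on Corollary \ref{cor2}) by specializing to a linear control function $\varphi(t)=kt$ with $k\in[0,1)$; your bound $d(gu,fu)+d(gv,fv)\leq 2\,\mathcal{N}_{f}(gu,gv)$ and the choice $\varphi(t)=2kt$ with $2k<1$ simply make explicit the detail the paper leaves implicit. No gaps.
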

		\begin{rem}
			Corollary \ref{cor8} remains a improved version of Corollary 5.13 established in Berzig \cite{Berzig2012} and Corollary 3.3 (corresponding to condition (18)) in Ahmadullah et al. \cite{AhmadIA}.
		\end{rem}
		\begin{cor}\label{cor9}
			Let $(f,g)$ be a pair of self-mappings defined on a metric space $(X,d)$ equipped with a binary relation  $\mathcal{R}$ and $Y$ a subspace of $X.$ Assume that the conditions  $(f),(g),(h),(i)$ and $(k)$ (or $(l)$) togetrher with the following condition holds:
			\begin{enumerate}
				\item [$(q_{4})$]there exists $k\in [0,1/2)$ such that (for all $u,v\in X\;\textrm{with}\; (gu,gv)\in \mathcal{R}$)\\
			$$d(fu,fv)\leq k\big[d(gu,fv)+d(gv,fu)\big].$$
			\end{enumerate}
			Then $(f,g)$ has a coincidence point. Moreover, if $f(X)$ is $(g,\mathcal{R}^{s})$-directed and  $(f,g)$ is weakly compatible,
			then $(f,g)$ has a unique common fixed point.
		\end{cor}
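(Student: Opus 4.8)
The plan is to recognize that the Chatterjea-type contraction $(q_4)$ is nothing but a particular instance of the general $\varphi$-contraction $(q)$ underlying Corollary~\ref{cor2}, so that both the existence of a coincidence point and the uniqueness of the common fixed point follow at once by invoking Corollary~\ref{cor2} and Theorem~\ref{th2} with a suitable choice of $\varphi$.

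First I would set $\varphi(t)=2kt$ for $t\in[0,\infty)$. Since $k\in[0,1/2)$ forces $2k\in[0,1)$, this $\varphi$ is increasing, so $(\Phi_1)$ holds; and because $\varphi^{n}(t)=(2k)^{n}t$, the series $\sum_{n=1}^{\infty}\varphi^{n}(t)=t\sum_{n=1}^{\infty}(2k)^{n}=\dfrac{2k}{1-2k}\,t$ converges for every $t>0$, which gives $(\Phi_2)$. Hence $\varphi\in\Phi$.

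Next, for any $u,v\in X$ with $(gu,gv)\in\mathcal{R}$, the term $\tfrac12[d(gu,fv)+d(gv,fu)]$ is one of the three quantities over which $\mathcal{N}_{f}(gu,gv)$ is the maximum, so $\tfrac12[d(gu,fv)+d(gv,fu)]\le \mathcal{N}_{f}(gu,gv)$. Rewriting $(q_4)$ and combining this bound with the monotonicity of $\varphi$ yields
$$d(fu,fv)\le k\big[d(gu,fv)+d(gv,fu)\big]=2k\cdot\tfrac12\big[d(gu,fv)+d(gv,fu)\big]\le 2k\,\mathcal{N}_{f}(gu,gv)=\varphi\big(\mathcal{N}_{f}(gu,gv)\big).$$
Thus condition $(q)$ of Corollary~\ref{cor2} holds with this $\varphi$, while the remaining hypotheses $(f),(g),(h),(i)$ and $(k)$ (or $(l)$) are assumed verbatim. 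Corollary~\ref{cor2} therefore delivers a coincidence point of $(f,g)$, and, under the extra assumptions that $f(X)$ is $(g,\mathcal{R}^{s})$-directed and $(f,g)$ is weakly compatible, Theorem~\ref{th2} upgrades this to a unique point of coincidence and hence a unique common fixed point.

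There is essentially no genuine obstacle in this derivation: the only point requiring attention is the verification that the linear choice $\varphi(t)=2kt$ truly belongs to $\Phi$, which rests on the strict inequality $2k<1$ coming from $k<1/2$. This is precisely why the admissible range in $(q_4)$ is $[0,1/2)$ rather than $[0,1)$, and it is the sole place where the Chatterjea-type (rather than Banach-type) structure of the contraction makes itself felt.
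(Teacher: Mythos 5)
Your proposal is correct and follows exactly the route the paper intends: the paper states Corollary~\ref{cor9} without a separate proof, declaring it an immediate consequence of Theorem~\ref{th2} (via Corollary~\ref{cor2}) under a linear choice of $\varphi$, and your choice $\varphi(t)=2kt$ with $2k\in[0,1)$, together with the bound $\tfrac12[d(gu,fv)+d(gv,fu)]\le\mathcal{N}_f(gu,gv)$, is precisely the intended verification. Your closing remark correctly identifies why the range $k\in[0,1/2)$ is exactly what makes $\varphi\in\Phi$, so nothing is missing.
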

		\begin{rem}
			Corollary \ref{cor9} is an improved version of Corollary 5.14 of Berzig \cite{Berzig2012} and Corollary 3.3 (corresponding to condition (19)) due to Ahmadullah et al. \cite{AhmadIA}.
		\end{rem}

		\begin{rem}
			Under the consideration $g=I$ (identity mapping on $X$), Theorems \ref{th1} and \ref{th2} deduce the fixed point results of Ahmadullah et al. \cite[Theorem 2.1 and 2.5]{AhmadIG}.
		\end{rem}
		
		\begin{rem}
			On setting $g=I$ (identity mapping on $X$), in Corollaries \ref{cor0}-\ref{cor9}, we deduce the fixed point results which are the sharpened version of several results in the existing literature.
		\end{rem}
	Also under the universal relation $(i.e.,~\mathcal{R}=X\times X),$ Theorems \ref{th2} and \ref{th4} unify to the following lone corollary:
		
		\begin{cor}\label{cor10}
			Let $(X,d)$ be a metric space and $(f,g)$ a pair of self-mappings on
			$X$. Suppose that the following conditions hold:
			\begin{enumerate}
				\item [$(A)$]there exists $Y\subseteq X,$ $f(X)\subseteq Y\subseteq g(X)$ such that $(Y,d)$ is complete;
				\item [$(B)$] there exists $\varphi\in\Phi $ such that (for all $u,v\in X$)
				$$d(fu,fv)\leq \varphi\big(max\big\{d(gu,gv), \frac{1}{2}[d(gu,fu)+d(gv,fv)], \frac{1}{2}[d(gu,fv)+d(gv,fu)]\big\}\big).$$
			\end{enumerate}
			Then $(f,g)$ has a unique common fixed point.
		\end{cor}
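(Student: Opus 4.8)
The plan is to derive Corollary \ref{cor10} by specialising Theorem \ref{th2} (equivalently Theorem \ref{th4}) to the universal relation $\mathcal{R}=X\times X$ and checking that each of their hypotheses collapses either to $(A)$, to $(B)$, or to a triviality. First I would record the relation-theoretic simplifications. Since every ordered pair lies in $\mathcal{R}$, condition $(h)$ holds for any $w_0$, the $(f,g)$-closedness $(i)$ is automatic (whatever $(fu,fv)$ is, it belongs to $\mathcal{R}$), and the quantifier ``for all $u,v$ with $(gu,gv)\in\mathcal{R}$'' appearing in $(q)$/$(m)$ becomes simply ``for all $u,v$'', i.e.\ exactly the contraction $(B)$. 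By Remark \ref{rmk1}, $\mathcal{R}$-completeness coincides with completeness, so the $\mathcal{R}$-completeness of $Y$ in $(f)$ is just the completeness of $(Y,d)$ granted by $(A)$; and $(g)$ together with $(k_1)$ follow from the chain $f(X)\subseteq Y\subseteq g(X)$ in $(A)$.

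Next I would dispatch the continuity/regularity alternatives and the extra hypotheses $(r)$ and $(t)$, all of which become vacuous under $\mathcal{R}=X\times X$. Indeed $\mathcal{R}|_Y=Y\times Y$ is trivially $d$-self-closed (every pair is comparable, so any subsequence already satisfies $[gw_{n_k},gx]\in\mathcal{R}$), which verifies branch $(k_2)$ and hence $(k)$; likewise $\mathcal{R}|_{fX}=fX\times fX$ is complete, giving $(t)$; and $f(X)$ is $(g,\mathcal{R}^s)$-directed because $\mathcal{R}^s=X\times X$ and any $gw$ serves as a common successor, giving $(r)$. With every hypothesis of Theorem \ref{th2} (and of Theorem \ref{th4}) in force, I obtain a coincidence point and, via the directedness argument of Step 1 of Theorem \ref{th2} (or the completeness argument of Theorem \ref{th4}), a \emph{unique} point of coincidence $\overline{x}=fx=gx$.

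The remaining step is to upgrade this unique point of coincidence to a unique common fixed point, and I expect this to be the main obstacle. Here I would invoke weak compatibility of $(f,g)$ exactly as in Steps 2--3 of the proof of Theorem \ref{th2}: weak compatibility yields $f(gx)=g(fx)=g(gx)$, so $y:=gx$ is again a coincidence point, and uniqueness of the point of coincidence forces $y=\overline{x}=gy=fy$, whence $y$ is a common fixed point, unique since any other would be a coincidence point with the same point of coincidence. This passage is in fact load-bearing: the universal relation trivialises every order-theoretic hypothesis but supplies nothing in place of commutation at the coincidence point. One checks on the two-point space $X=\{0,1\}$ with $g$ the transposition and $f\equiv 0$ (take $Y=\{0\}$) that $(A)$ and $(B)$ hold yet $(f,g)$ has no common fixed point; thus weak compatibility must be understood as part of the hypotheses (as it is in Theorems \ref{th2} and \ref{th4}), and it is precisely this condition that the proof must use to finish.
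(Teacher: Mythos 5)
Your derivation is exactly the paper's own (implicit) proof: the authors give no separate argument for Corollary \ref{cor10}, stating only that under the universal relation Theorems \ref{th2} and \ref{th4} ``unify'' to it, and your checklist --- $(h)$, $(i)$, $(k_2)$, $(r)$ and $(t)$ trivialize when $\mathcal{R}=X\times X$; $(f)$, $(g)$, $(k_1)$ collapse to $(A)$ via Remark \ref{rmk1}; $(q)$ collapses to $(B)$, with $(m)$ then following from $(B)$ by Remark \ref{rem3} and the monotonicity of $\varphi$ --- is precisely the verification the authors leave to the reader. Beyond that, your closing observation is correct and identifies a genuine defect in the \emph{statement} rather than in your proof: both Theorem \ref{th2} and Theorem \ref{th4} conclude only a unique point of coincidence and need weak compatibility for the ``moreover'' clause, yet Corollary \ref{cor10} as printed asserts a unique common fixed point without listing that hypothesis. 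Your counterexample is valid: with $X=\{0,1\}$, $f\equiv 0$, $g$ the transposition and $Y=\{0\}$, conditions $(A)$ and $(B)$ hold (the left-hand side of $(B)$ vanishes identically), the pair has coincidence point $1$ with unique point of coincidence $0$, but no common fixed point, since $f(g1)=0\neq 1=g(f1)$ shows weak compatibility fails. Nothing in the universal relation rescues this: the branch of Corollary \ref{cor2} that trivializes is $(k)$, so the compatibility alternative $(l)$ is never forced, and weak compatibility must be adjoined to the hypotheses exactly as your proof assumes.
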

	
		\section{Illustrative Examples}
		In this section, we furnish some examples to demonstrate the realized improvement of our proved results.
		
		\begin{example}\label{exm1}
			Let $(X,d)$ be a metric space, where $X=(-2,4)$ and $d(x,y)=|x-y|, ~\forall~x~y \in X$.
			Now, define a binary relation $\mathcal{R}=\{(x,y)\in X^2~|~ x\geq y, x,y \geq 0\}\cup \{(x,y)\in X^2~|~ x\leq y, x,y \leq 0\}$, an increasing mapping $\varphi : [0,\infty)\to [0,\infty)$ by $\varphi(s)= \frac{1}{3}s$ and two self-mappings $f,g:X\to X$ by
			$$f(x)= 0, ~~\forall~ x \in (-2,4)
			~ \;{\rm and} \quad g(x)=\left\{
			\begin{array}{ll}
			\frac{x}{3}, & \hbox{$x\in (-2,3]$;} \\
			1, & \hbox{$x\in (3,4)$.}
			\end{array}
			\right.$$
			Let $Y=[-\frac{1}{2},1)$, so that $f(X)=\{0\}\subset Y\subset gX=(-\frac{2}{3},1]$ and $Y$ is $\mathcal{R}$-complete but $X$ is not $\mathcal{R}$-complete.
			Indeed, $\mathcal{R}$ is $(f,g)$-closed and $f$ and $g$ are $\mathcal{R}$-continuous.
			By straightforward calculations, one can easily verify hypothesis $(m)$ of Theorems \ref{th1}
			thus in all by Theorem \ref{th1} we obtain, $(f,g)$ has a coincidence point (Observe that, $C(f,g)=\{0\}$).
			\iffalse
			Take any $\mathcal{R}$-preserving sequence
			$\{x_n\}$ in $Y,~ i.e.,$ $$ x_1\geq x_2 \geq x_3\geq\cdots \geq x_n\geq \cdots ~~{\rm in~ Y~ such~ that}~ x_n\stackrel{d}{\longrightarrow} x.$$
			Observe that $x_n\geq x$, for all $n\in\mathbb{N}$, so we can choose a subsequence
			$\{x_{n_k}\}$ of the sequence $\{x_n\}$ such that $x_{n_k}\geq x$, for all $k\in \mathbb{N}$, which
			amounts to saying that $[x_{n_k},x]\in \mathcal{R}|_{Y}$, for all $k\in \mathbb{N}$. Therefore,
			$\mathcal{R}|_{Y}$ is $d$-self-closed.
			if $x,y \in (-1,2],$ then $d(fx,fy)\le \frac{1}{2}d(x,y)$ and if $x\in (2,4), y\in (-1,2]$, then $d(fx,fy)=|1-\frac{y}{2}|=\frac{1}{2}d(2,y)< \frac{1}{2}d(x,y)$
			\fi
			Moreover, as $fX ~is~(g,\mathcal{R}^s)\text{-}$directed, $\mathcal{R}|_{fX}$ is complete and $(f,g)$ commute at their coincidence point $i.e., x=0$ therefore, all the hypotheses of Theorems \ref{th2} and \ref{th4} are satisfied, ensuring the uniqueness of the common fixed point. Notice that, $x=0$ is the only common fixed point of  $(f,g)$.
			
			\vspace{.3cm}
			With a view to show the genuineness of our results, notice that $\mathcal{R}$ is not symmetric and $\mathcal{R}$ can not  be a symmetric closure of any binary relation. Also $(X,d)$ is not complete and even not $\mathcal{R}$-complete which shows that Theorems \ref{th1}, \ref{th2} and \ref{th4} are applicable to the present example, while Theorem \ref{th1.1} and even Corollary \ref{cor0} are not, which substantiates the utility of Theorems \ref{th1}, \ref{th2} and \ref{th4}.
		\end{example}
		
		\vspace{.3cm}
		\begin{example}\label{exm2}
			Let $X=[0,4)$ with usual metric $d$ and $\mathcal{R}= \{(0,0),(0,1),(1,0),(1,1),(1,2),(2,3)\}$ be a binary relation whose symmetric closure $\mathcal{S}=\{(0,0),(0,1),(1,0),(2,3),(1,1),(1,2),(2,1),(3,2)\}$ and $(f,g)$ a pair of self-mappings on $X$ defined by
			$$f(x)=\left\{
			\begin{array}{ll}
			0, & \hbox{$x\in [0,4)\cap \mathbb{Q}$;} \\
			1, & \hbox{$x\in [0,4)\cap \mathbb{Q}^{c}$,}
			\end{array}
			\right.
			\;{\rm and}
			\quad g(x)=\left\{
			\begin{array}{ll}
			x, & \hbox{$x\in \{0,1,2\}$;} \\
			3, & \hbox{$ \;{\rm otherwise}.$}
			\end{array}
			\right.$$\\
			Let $Y=\{0,1\}$ which is $\mathcal{R}$-complete and $fX=\{0,1\}\subseteq Y\subseteq gX=\{0,1,2,3\}.$
			Define an increasing function $\varphi : [0,\infty)\to [0,\infty)$ by $\varphi(s)= \frac{5}{6}s.$\
			Clearly, $\varphi \in \Phi$ and both $f$ and $g$ are not continuous. Also, $\mathcal{R}$ is $(f,g)$-closed. Take any $\mathcal{R}$-preserving sequence
			$\{x_n\}$ in $Y,$ $i.e.,$  $$(x_n,x_{n+1})\in\mathcal{R}|_Y, ~{\rm for~ all}~ n\in \mathbb{N}
			~ {\rm with} ~x_n\stackrel{d}{\longrightarrow} x.$$
			Here, one can notice that if $(x_n,x_{n+1})\in\mathcal{R}|_{Y}$, for all $n\in\mathbb{N}$
			then there exists $N \in\mathbb{N}$
			such that $x_n=x \in \{0,1\}, ~{\rm for~ all}~ n\geq N$. So, we choose a subsequence
			$\{x_{n_k}\}$ of the sequence $\{x_n\}$ such that $x_{n_k}=x$, for all $k\in \mathbb{N}$, which
			amounts to saying that $[x_{n_k},x]\in \mathcal{R}|_{Y}$, for all $k\in \mathbb{N}$. Therefore,
			$\mathcal{R}|_{Y}$ is $d$-self-closed.
			
			Now, to substantiate the contraction condition $(m)$ of Theorems \ref{th1}. For this, we need to verify for $(gx,gy)\in \{(2,3)\},$
			otherwise, $d(fx,fy)=0.$ If $(gx,gy)\in \{(2,3)\}$ $\Rightarrow$ $x=2,~y\in [0,4)-\{0,1,2\}$,
			then there are two cases arises:\\
			{\bf Case (1):} If $x=2,~y\in ([0,4)-\{0,1,2\})\cap \mathbb{Q}$, then condition $(m)$ is obvious.\\
			{\bf Case (2):} If $x=2,~y\in ([0,4)-\{0,1,2,3\})\cap \mathbb{Q}^{c}$, then we have
			\begin{eqnarray*}
				% \nonumber % Remove numbering (before each equation)
				d(f2,fy)=1&\leq & \varphi(max\{d(g2,gy), d(g2,f2),d(gy,fy),\frac{1}{2}[d(g2,fy)+d(gy,f2)]\})\\
				&=&  \varphi(max\{d(2,3), d(2,0),d(3,1),\frac{1}{2}[d(2,1)+d(3,0)]\}\\
				&=& \varphi(2).
			\end{eqnarray*}
		
			Thus all the conditions of Theorem \ref{th1}  are satisfied, hence $(f,g)$ has a coincidence point (namely $C(f,g)=\{0\}$). Also ${fX}$ is $(g,\mathcal{R}^{s})$-directed,  $(f,g)$ commutes at their coincidence point $i.e.,$ at $x=0$ and condition $(m)$ of Theorem \ref{th2} holds. Therefore all the hypotheses of Theorem \ref{th2} are satisfied. Notice that, $x=0$ is the only common fixed point of  $(f,g)$.
			
			Now, since $(gx,gy)=(2,3)\in \mathcal{R},$ clearly $x=2$, we choose $y=\sqrt{2}$ but $$1=d(f2,f\sqrt{2})\leq \alpha d(g2,g\sqrt{2})=\alpha,$$ which shows that contraction condition of Theorem \ref{th1.2} (due to Alam and Imdad \cite{Alamimdad2}) is not satisfied.
			Further, Theorem \ref{th1.1} is not applicable to the present example as underlying metric space $(X,d)$ is not complete and $\mathcal{R}$ is not symmetric closure of any binary relation. Thus, our results are an improvement over Theorem \ref{th1.1} (due to  Berzig \cite{Berzig2012}) and Theorem \ref{th1.2} (Alam and Imdad \cite{Alamimdad2}).
		\end{example}

		\vspace{.3cm}\noindent
		{\bf Acknowledgements:} All the authors are read and approved the final version of this paper.
		
		\vspace{.3cm}

	\end{document}